\documentclass[11pt,leqno]{amsart}
\usepackage{amsthm,amsfonts,amssymb,amsmath,oldgerm}
\usepackage{epsfig}
\numberwithin{equation}{section}
\usepackage[thinlines]{easybmat}
\usepackage{bbm}

\usepackage{hyperref}

\setlength{\evensidemargin}{0in} \setlength{\oddsidemargin}{0in}
\setlength{\textwidth}{6in} \setlength{\topmargin}{0in}
\setlength{\textheight}{8in}


\renewcommand\d{\partial}

\def\eps{\varepsilon }


\renewcommand\d{\partial}
\DeclareMathOperator{\dD}{d\!}

\renewcommand\d{\partial}

\newcommand\N{\mathbb N}
\newcommand\R{\mathbb R}

\def\eps{\varepsilon}


\newcommand\br{\begin{remark}}
\newcommand\er{\end{remark}}
\newcommand\bp{\begin{pmatrix}}
\newcommand\ep{\end{pmatrix}}
\newcommand\be{\begin{equation}}
\newcommand\ee{\end{equation}}
\newcommand\ba{\begin{equation}\begin{aligned}}
\newcommand\ea{\end{aligned}\end{equation}}


\newcommand{\bap}{\begin{app}}
\newcommand{\eap}{\end{app}}
\newcommand{\begs}{\begin{exams}}
\newcommand{\eegs}{\end{exams}}
\newcommand{\beg}{\begin{example}}
\newcommand{\eeg}{\end{exaplem}}
\newcommand{\bpr}{\begin{proposition}}
\newcommand{\epr}{\end{proposition}}
\newcommand{\bt}{\begin{theorem}}
\newcommand{\et}{\end{theorem}}
\newcommand{\bc}{\begin{corollary}}
\newcommand{\ec}{\end{corollary}}
\newcommand{\bl}{\begin{lemma}}
\newcommand{\el}{\end{lemma}}
\newcommand{\bd}{\begin{definition}}
\newcommand{\ed}{\end{definition}}
\newcommand{\brs}{\begin{remarks}}
\newcommand{\ers}{\end{remarks}}

\newtheorem{theo}{Theorem}[section]

\newtheorem{exams}[theo]{Examples}

\numberwithin{equation}{section}


\newcommand{\U }{\mathcal{U}}

\newcommand{\RR}{{\mathbb R}}

\newcommand{\const}{\text{\rm constant}}

\newtheorem{theorem}{Theorem}[section]
\newtheorem{proposition}[theorem]{Proposition}
\newtheorem{corollary}[theorem]{Corollary}
\newtheorem{lemma}[theorem]{Lemma}
\newtheorem{definition}[theorem]{Definition}

\newtheorem{example}[theorem]{Example}
\newtheorem{remark}[theorem]{Remark}




\pagestyle{headings}










\newcommand{\beq}{\begin{equation}}
\newcommand{\eeq}{\end{equation}}



\title[Convective-wave solutions of the Richard-Gavrilyuk model]{
Convective-wave solutions of the Richard-Gavrilyuk model
for inclined shallow water flow }


\author{L.~Miguel Rodrigues}
\address{
Univ Rennes \& IUF, CNRS, IRMAR - UMR 6625, F-35000 Rennes, France}
\email{{\tt luis-miguel.rodrigues@univ-rennes1.fr}}
\thanks{Research of L.M.R. was partially supported by EPSRC grant no EP/R014604/1.}

\author{Zhao Yang}
\address{Academy of Mathematics and Systems Science, Chinese Academy of Sciences, Beijing
100190 China.}
\email{yangzhao@amss.ac.cn}
\thanks{Research of Z.Y. was partially supported by an IU COAS dissertation year fellowship}

\author{Kevin Zumbrun}
\address{Indiana University, Bloomington, IN 47405}
\email{kzumbrun@indiana.edu}
\thanks{Research of K.Z. was partially supported
under NSF grants no. DMS-1400555 and DMS-1700279}
\begin{document}

\maketitle

\begin{abstract}
We study for the Richard-Gavrilyuk model of inclined shallow water flow, an extension of the classical Saint Venant equations incorporating vorticity, the new feature of \emph{convective-wave} solutions analogous to contact discontinuitis in inviscid conservation laws. These are traveling waves for which fluid velocity is constant and equal to the speed of propagation of the wave, but fluid height and/or enstrophy (thus vorticity) varies. Together with hydraulic shocks, they play an important role in the structure of Riemann solutions.

\smallskip

\noindent {\it Keywords}: shallow water equations; stability of traveling waves; hyperbolic balance laws.

\smallskip

\noindent {\it 2010 MSC}: 35Q35, 35C07, 35B35, 76E15, 35L40, 35L67, 35P15.
\end{abstract}

\section{Introduction}\label{s:intro}
In this note we study new 
traveling-wave solutions, that we call \emph{convective waves}, 
of the recently-introduced Richard-Gavrilyuk model (RG) of inclined (incompressible) shallow-water flow \cite{R,RG1,RG2}, of a type not present in the classical Saint-Venant equations (SV) from which (RG) descends.

The Saint-Venant equations are the industry standard in hydroengineering applications such as dam or spillway design \cite{BM,JNRYZ}, having been used --- apparently successfully --- unchanged for nearly a century \cite{Je}. However, the phenomena they model are sufficiently complicated that the limits of their applicability are difficult to determine. See, for example, the discussion of roll wave stability for \eqref{sv} in \cite{JNRYZ}, showing the delicacy of that question. And, in the case of roll waves at least, it has been known since the experimental work of Brock \cite{Br1,Br2} that the (explicit) roll wave solutions of \eqref{sv} deviate in shape from experimentally observed profiles, exhibiting an ``overshoot'' phenomenon near shock discontinuities. This inconsistency has recently been resolved by Richard and Gavrilyuk \cite{R,RG1,RG2} by incorporating small-scale vorticity in the modeling, converting the Saint-Venant equations (SV) to the extended Richard-Gavrilyuk model (RG)- {\it the first such advance in nearly 100 years.} See Fig. \ref{figure1} showing experimental inaccuracy of Saint-Venant waves near breaking, pointed out in \cite{Br1,Br2} as compared to near-exact fit of (RG) roll waves.

\begin{figure}[htbp]
\begin{center}
\includegraphics[scale=.7]{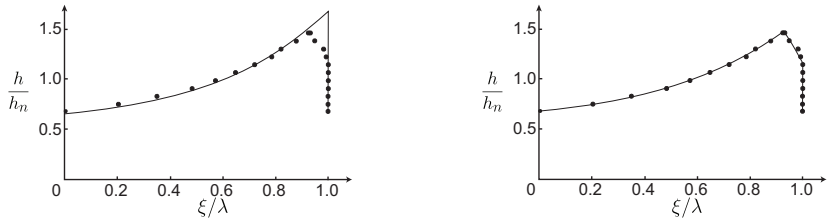}
\end{center}
\caption{Numerics vs. experiment. Left panel: Saint-Venant. Right panel: Richard-Gavrilyuk.    }
\label{figure1}
\end{figure}

The introduction of the (RG) model raises a number of interesting new questions: how does the existence theory for traveling waves differ between (SV) and (RG); can the rigorous stability theories developed in \cite{JNRYZ,YZ} for (SV) be adapted to the more complicated (RV)? and, most important, does (RV) predict new physical phenomena not captured by (SV)? These questions 
are 
addressed for hydraulic shocks and roll waves in \cite{RYZ} and \cite{RYZ2}. Here, we point out a new type of traveling wave occurring in (RG) but not (SV), of {\it convective-wave} solutions propagating with constant speed equal to the (everywhere constant) fluid velocity, but with fluid height and small-scale vorticity varying.

These may be understood as relaxation profiles for contact discontinuities of an associated equilibrium system, in the same way that hydraulic shock solutions are relaxation profiles for equilibrium shocks \cite{Bre,W,L,YZ,RYZ,SYZ}. Different from usual contact profiles, these are seen to be of degenerate type appearing in an infinite-dimensional family of possible nearby shapes. Evidently, this corresponds at linearized level to an infinite family of zero-eigenvalue modes, implying neutral stability at best, or orbital stability within this encompassing infinite-dimensional family. However, remarkably, we are able to factor out this degeneracy at the spectral level. Furthermore, for the subclass of such waves that are 
both asymptotically constant at infinity and piecewise smooth with a finite number\footnote{Possibly zero.} of discontinuitites,
we establish by a generalized Sturm-Liouville argument (similar to \cite{SYZ,SZ}) that their spectral stability is completely determined by the spectral stability of their limiting endstates, hence reduced to an explicit stability condition.

Numerical time-evolution experiments confirm this conclusion, showing that Riemann data is resolved into an asymptotic pattern consisting of a hydraulic shock plus a convective contact wave, as predicted by the associated equilibrium system. They also show that small initial perturbations of a given convective wave leads in large-time to {\it another} significantly different convective wave, in agreement with the above-mentioned infinite-dimensional degneracy. Proving this observed nonlinear stability remains an interesting open question; see discussion in Section~\ref{s:disc}.

\medskip

The content of the present contribution is organized as follows. In Section~\ref{s:rg} we introduce the (RG) model. Besides its main basic properties, we provide there some educated guesses about expected nonlinear dynamics, based on intuitive analogies with more standard relaxation systems. Section~\ref{s:rg} mixes elementary observations with formal and heuristic arguments. For comparison, it is preceded in Section~\ref{s:sv} by a similar discussion for the (SV) model. Rigorous mathematical analysis is contained in Sections~\ref{s:convective}, \ref{s:stab-smooth} and~\ref{s:discont}, where we investigate the structure of convective waves and their spectral stability. They contain our main theorems, Theorem~\ref{th:main} that elucidates spectral stability of convective waves with smooth profiles, Theorem~\ref{th:conv} that studies convective stabilization by spatial weights, and Theorem~\ref{th:disc} that extends the analysis to discontinuous profiles. In Section~\ref{s:num} we provide various numerical experiments, validating some of the proved facts, confirming some of our intuitions but also trailblazing in the wild. We conclude with a few perspectives in Section~\ref{s:disc}.

\bigskip

\noindent {\bf Acknowledgment:} The authors would like to warmly thank Sergey Gavrilyuk and Pascal Noble for enlightening discussions about the (RG) model. L.M.R. and Y.Z. also express their gratitude to Indiana University for its hospitality during part of the preparation of the present contribution. L.M.R. would like to thank the Isaac Newton Institute for Mathematical Sciences, Cambridge, for support and hospitality during the programme \emph{Dispersive hydrodynamics}.

\section{The Saint Venant equations}\label{s:sv}

The Saint-Venant equations take the form of a $2\times 2$ {\it hyperbolic relaxation system}
\ba\label{sv}
h_t + (hU)_x&=0,\\
(hU)_t 
+(hU^2+ p(h)  )_x
&=\hat gh-C_f|U|U,\\
\ea
where $h$ and $u$ denote fluid height and (vertically averaged) velocity at distance $x$ along an inclined ramp,
\be\label{piso}
p(h)=  g'\frac{h^2}{2},
\ee
and $g'=g\cos \theta$, $\hat g=g\sin \theta$, where $g$ is the gravitational constant, $\theta$ is the angle from horizontal of the ramp, and $C_f$ is a coefficient of turbulent bottom friction, modeled according to Ch\'ezy's law as proportional to velocity squared.
See, e.g., \cite{Je,Dr,BM,R,RG1,RG2}. 

The lefthand (first-order) side of \eqref{sv}--\eqref{piso} may be recognized as the equations of isentropic compressible gas dynamics with $\gamma$-law pressure, where $h$ plays the role of density and $\gamma=2$, from which we may deduce hyperbolicity \cite{Da,Bre}, with characteristics
\begin{align}\label{svchar}
\alpha_1&=U- a,&\alpha_2&=U+ a, 
\end{align}
where $a$ is sound speed, given by
\be\label{svsound}
a=\sqrt {p_h} =\sqrt{g'H_0}.
\ee

The formal equilibrium system obtained by setting the righthand (zero-order) side to zero is the scalar, Burgers-type equation 
\begin{align}\label{burgers}
h_t +q(h)_x&=0,&
\textrm{where }q(h):= \sqrt{\frac {\hat g h^3} {C_f}},
\end{align}
with characteristic speed
\be\label{echar}
\alpha_*=q'(h)=
\frac32 \sqrt{\frac {\hat g h} {C_f}},
\ee
where $u(h)= \sqrt{\frac {\hat g h} {C_f} }$ is determined by the equilibrium condition. In situations of hydrodynamic stability, or stability of constant-height equilibrium flow $(h,u)\equiv (H_0, U_0)$, $\hat g H_0=C_f U_0^2$, \eqref{burgers} is expected to approximately govern near-equilibrium behavior, in particular exhibiting ``hydraulic shock'', or ``bore'' type solutions \cite{Je,W,L,YZ,SYZ}. In situations of hydrodynamic instability, one observes, rather, pattern-formation and appearance of periodic ``roll wave'' solutions; see, e.g., \cite{C,Dr,Br1,Br2,BM,JNRYZ}.

As derived by Jeffreys \cite{Je,W}, the condition for hydrodynamic stability is
\be\label{hydro}
F:= \frac{U_0 }{a}< 2,
\ee
where 
$F$ is the {\it Froude number}, a dimensionless constant relating fluid velocity to sound speed.
Condition \eqref{hydro} corresponds to the {\it subcharacteristic} (or ``interlacing'') {\it condition} 
\be\label{sc}
\alpha_1<\alpha_*<\alpha_2
\ee
of Whitham \cite{W,L}, a standard necessary condition for hydrodynamic stability of relaxation systems. For, writing $q(H_0)$ as $q=H_0 u(H_0)$, where $u(h)= \sqrt{\frac {\hat g H_0} {C_f} }$, we have 
\begin{align}\label{alphastar}
\alpha_*&= \frac{dq}{dH_0}= U_0 + a_*,&\textrm{where   }
a_*=H_0 u'(H_0)= \frac{1}{2} U_0.
\end{align}
Comparing to \eqref{svchar}, we find that \eqref{sc} is equivalent to $a_*< a$, yielding \eqref{hydro}, independent of the choice of pressure function $p$ in \eqref{sv}. For the choice \eqref{piso} arising in the Saint Venant model (SV), $F$ is independent of height $H_0$ (or, equivalently, of $U_0=u(H_0)$), reducing to 
\be\label{Fsv}
F= \frac{U_0 }{\sqrt{g'H_0} }= \sqrt{\frac {\tan \theta}{C_f }}.
\ee

\section{The Richard-Gavrilyuk equations}\label{s:rg}
The Richard-Gavrilyuk equations are a $4\times 4$ relaxation system \cite[pp 383--384]{R} 
\ba\label{rg4}
h_t + (hU)_x&=0,\\
(hU)_t + (hU^2+ p )_x&=\hat gh-C|U|U,\\
\big(hE)_t + \big(hUE + Up)_x &= (\hat gh -C_f U|U|)U,\\ 
(h\varphi)_t + (h\varphi U)_x&=0,
\ea
with
\begin{align}\label{gaslaws}
p&= \frac12 g'h^2+ (\Phi+\varphi)h^3,&
E&=\frac12 U^2+e,&
e&=\frac12(g'h + (\Phi+\varphi)h^2),
\end{align}
where $h$ and $U$ are fluid height and velocity; $\Phi$ and $\varphi$ are large- and small-scale enstrophies associated with vorticity; $g'$, $\hat g$, $C_t$, $C_f$ are physical parameters; and
\be\label{C}
C=C_f \frac{\varphi}{\Phi+\varphi} + C_t \frac{\Phi}{\Phi+\varphi}.
\ee

For reference, we recall here the physical meaning of variables $\Phi$ and $\varphi$,
as given in \cite{RG1,RG2}. These are so-called \emph{enstrophies}, consisting of squared vorticity. They comprise a splitting of total enstrophy/vorticity, with $\varphi$ corresponding to small-scale vorticity near the bottom, and $\Phi$ to large-scale vorticity near shocks. For both roll waves and hydraulic shocks, $\varphi$ is necessarily constant along profiles.  For smooth hydraulic shock profiles, $\Phi$ is constant as well, in agreement with the physical derivation of $\Phi$ \cite{RYZ}; for 
roll waves\footnote{We choose to restrain the term \emph{roll waves} to periodic waves similar to the ones of (SV), in particular discontinuous, despite the fact that there are periodic traveling waves of convective type, including but not restricted to discontinuous ones.} 
and discontinuous hydraulic shock profiles, $\Phi$ builds up near the component subshock discontinuity \cite{RYZ,RYZ2} again in agreement with the derivation.

Let us point out that there are actually two slightly different Richard-Gavrilyuk models. We choose to work with the original one, from \cite{RG1}. The second version, from \cite{RG2}, is obtained from \eqref{rg4} by interchanging the roles of $C$ and $C_f$ and is presented in \cite{RG2} as leading to qualitatively and quantitatively similar numerical results, with a structure seemingly closer to the one of (SV).

\subsection{Reduced $3\times 3$ equations}\label{s:reduced}
When setting $\varphi\equiv \const$ in \eqref{rg4}, we may eliminate the fourth equation, obtaining the reduced $3\times 3$ Richard-Gavrilyuk (RG3) model
\ba\label{rg3}
h_t + (hU)_x&=0,\\
(hU)_t + (hU^2+ p )_x&=\hat gh-C|U|U,\\
\big(hE)_t + \big(hUE + Up)_x &= (\hat gh -C_f U|U|)U,\\ 
\ea
with $\varphi\equiv \const$ considered as an additional model parameter.

This gives a large subclass of interesting solutions, including in particular all traveling waves $(h,U,\Phi, \varphi)(t,x)=(\bar h,\bar U,\bar \Phi, \bar \varphi)(x-ct)$ with\footnote{Or more generally with $\bar U$ taking the value $c$ on a discrete subset of the domain of smoothness of wave profiles.} $\bar U\neq c$ --- notably, roll waves, and hydraulic shocks \cite{R,RG1,RG2,RYZ,RYZ2}. For, combining \eqref{rg4}(i) and (iv), we obtain for smooth solutions the convection equation
\be\label{varphieq}
\varphi_t+ u\varphi_x=0
\ee
in place of \eqref{rg4}(iv). Thus, for smooth portions of a traveling-wave solution, $(\bar U-c)\bar \varphi_x=0$, giving $\bar \varphi\equiv \const$ so long as $\bar U\neq c$. Similarly, the first and fourth Rankine-Hugoniot conditions for \eqref{rg4} at a discontinuity of speed $c$ are $c[h]=[hU]$ and $c[h\varphi]= [hU\varphi]$, where $[\cdot]$ denotes jump across the discontinuity. Combining these and using $h> 0$ yields
$[\varphi]=0$ so long as $U$ is not equal to $c$ on both sides of the discontinuity.
It follows that $\varphi$ remains constant also across discontinuities with speed $c$ not equal to fluid velocity $U$.

\subsubsection*{Relation to gas dynamics}\label{s:gas}

Just as the lefthand (first-order derivative) part of the Saint-Venant equations (SV) correspond to isentropic gas dynamics with pressure function of a polytropic gas law with $\gamma=2$, the lefthand (first-order derivative) part of the reduced (RG3) model \eqref{rg3} corresponds to full gas dynamics with pressure law
\be\label{palt}
p(h,e)= 2he - \frac12g'h^2,
\ee
obtained by using \eqref{gaslaws}(iii) to eliminate total enstrophy $\Phi +\varphi$ in \eqref{gaslaws}(i):
similar to but not exactly the nonisentropic polytropic gas law with $\gamma=2$ $p_{poly}(h,e)=2he$.

By this observation, we may read off the characteristics of the first-order (homogeneous) part of \eqref{rg3} using standard gas-dynamical formulae \cite{Da,Bre,BFZ} as 
\begin{align}\label{3chars}
\tilde \alpha_1&= U-\tilde a,&
\tilde \alpha_2&= U,&
\tilde \alpha_3&= U+\tilde a,
\end{align}
where $a$ is sound speed, given by
\be\label{a}
\tilde a=\sqrt {p_h +  \frac{p p_e}{h^2}}=\sqrt{6e-2g'h}= \sqrt{g'h+ 3(\Phi+\varphi)h^2}.
\ee

We note in passing that total enstrophy $S(h,e):=\Phi+\varphi= \frac {2e-g'h}{h^2}$ 
serves as a specific entropy for the first-order, gas-dynamical part of the equations
\cite{R,RG1,RG2},
satisfying for temperature $T= \frac{1}{S_e}=\frac{h^2}{2}$ the thermodynamic law $ \dD e=-p \dD\tau + T\dD S$, $\tau=1/h$, or
\begin{align}\label{therm}
\dD e&=\frac{p}{h^2} \dD h + T\dD S,&\textrm{that is } S_h&= -\frac{p}{h^2 }S_e,&
\textrm{or }e_h&=\frac{p}{h^2}.
\end{align}
This gives as a consequence, again by standard gas-dynamical facts \cite{Da,BFZ} that the first-order part of the equation for $S=(\Phi+\varphi)$ is the simple convection equation
\be\label{Scon}
S_t+US_x=0
\ee

\br\label{conrmk}
Written in terms of convectional derivatives $\dot g:=(\partial_t + U\partial_x)g$, the gas-dynamics equations become
\begin{align}\label{con1}
\dot h&=- hU_x,&
\dot U&=-  p_x/h,&
\dot e&=- (p/h)U_x,
\end{align}
whence $\dot S= S_h \dot h + S_e \dot e \equiv 0$ if and only if $S_h h + S_e p/h=0$, or \eqref{therm}. Thus, any convected function $S(h,e)$ that is monotone in $e$ may serve as a specific entropy for gas dynamics; we may easily verify the role of $(\Phi+\varphi)$ a posteriori by direct computation 
$(\partial_t+U\partial_x)(\Phi+\varphi)=0$. 
\er

\br\label{conrmk2}
Alternatively, as in \cite{R,RG1,RG2}, expressing $p=\hat p(h,S)=\frac{g'h^2}{2}+ Sh^3$ using \eqref{gaslaws}(i) and
rewriting \eqref{con1} in standard fashion as $\dot h =- hU_x$, $\dot U =-  \hat p_x/h$, $\dot S =0$, we obtain a decoupled $S$-equation and isentropic gas dynamics with pressure $\hat p(\cdot, S)$, yielding convective characteristics $0, \pm \tilde a$, with $\tilde a=\sqrt{ \hat p_h}$, hence (by \eqref{therm}(iii)) more directly recovering \eqref{a}.
Note that the sound speed $\tilde a=\sqrt{\hat p_h}$ for (RG) agrees with the sound speed $a=\sqrt{p_h}$ for  (SV) only in the \emph{zero-entropic}, or zero-vorticity case $S\equiv 0$ in which (SV) was originally derived.
\er

\subsubsection*{Equilibrium  system}\label{s:equilibrium}
The formal equilibrium system obtained by setting zero order derivative terms to zero is the same Burgers type equation \eqref{burgers}
as for (SV), with 
\be\label{eqrel}
(u(h), \Phi(h))= \Big(\sqrt{\frac {\hat g h} {C_f} }, 0\Big),
\ee
and the same characteristic speed $\alpha_*=q'(H_0)=\frac 32 \sqrt{ \frac{\hat g H_0}{C_f}}$ given in \eqref{alphastar}. As noted in \cite{R,RG1,RG2}, (strong) hydrodynamic stability, or (strong) spectral stability of constant equilibrium flows for \eqref{rg3} holds provided
\be\label{hydroRGF}
\tilde F:=\frac{U_0}{\tilde a_0}<2,
\ee
a generalization to (RG) of condition \eqref{hydro} of Jeffreys \cite{Je}, and 
\be\label{hydroRGC}
C_f\geq C_t.
\ee
See Appendix~\ref{app:split} for detailed computations. 

Here, condition \eqref{hydroRGF} corresponds, by the same argument that was used for (SV), to the subcharacteristic condition $\tilde \alpha_1<\alpha_*<\tilde \alpha_3$ of Whitham \cite{W}, while \eqref{hydroRGC} corresponds to dissipativity of the zero-order derivative forcing terms on the righthand side of \eqref{rg3} in the convective, entropy mode $S$. Specifically, writing the complete reduced, inhomogeneous equations \eqref{rg3} in convective derivative form, we obtain
\be\label{inhomrg3}
\dot h + hU_x =0,\quad
\dot U +  p_x/h= \frac{\hat g - C |U|U}{h},\quad
\dot S =\Big(1-\frac{\varphi}{S}\Big) \frac{ (C_t-C_f)|U|^3}{h^3},
\ee
and thus,  
where $\Phi\neq0$ and $U\neq 0$, $\dot S \leq 0$ if and only if $C_t \leq C_f$, with strict inequality unless $C_t=C_f$.

\subsection{Full $4\times 4$ system}\label{s:full}
Augmenting \eqref{rg3} with the simplified $\varphi$ equation \eqref{varphieq}, we find for smooth solutions that the full (RG) equations, written in convective derivative form,
are the reduced equations \eqref{inhomrg3} together with $\dot \varphi=0$, hence hyperbolic with characteristics 
\begin{align*}
\tilde \alpha&:= U-\tilde a,&0,&&0,&&U+\tilde a,
\end{align*}
where sound speed $\tilde a$ is again as in \eqref{a}. Moreover, the decoupled convective $\varphi$ mode is always neutrally stable, hence hydrodynamic stability is again equivalent to the conditions \eqref{hydroRGF}--\eqref{hydroRGC} for the reduced system \eqref{rg3}, {\it but is now at best nonstrict} in the sense that the dispersion relations for the linearization about a constant equilibrium state $(h_0,U_0, \Phi_0, \varphi_0)$ include the neutral, $\varphi$ mode $\lambda_4(k)=-iU_0k$, $k\in \R$.

\subsubsection*{Equilibrium system}\label{s:eq4}
Likewise, setting the righthand side (zero-order derivative part) of \eqref{rg4} to zero gives the same equilibrium relations \eqref{eqrel}
as for the reduced model \eqref{rg3}. However, with the addition of \eqref{rg4}(iv), the formal equilibrium model becomes now a $2\times 2$ {\it system}
\ba\label{eq2}
h_t + q(h)_x&0, \\
(h\varphi)_t +  (hu(h) \varphi)_x&=0, 
\ea
or, for smooth solutions, $ h_t + q(h)_x=0$, $\varphi_t +  u(h) \varphi_x=0$, with characteristics $\alpha_{*,1}= \frac{3}{2} u(h)$ and $\alpha_{2,*}=u(h)$ in characteristic modes $h$ and $\varphi$. And, since $\alpha_{*,2}=u(h)$ is {\it linearly degenerate} (independent of the associated mode $\varphi$), system \eqref{eq2}, besides Burgers shock and rarefaction waves in mode $h$, admits the new feature of {\it contact discontinuities} \cite{Bre,Da} in the mode $\varphi$, traveling with characteristic speed $c=\alpha_{*,2}=U_0$ equal to fluid velocity.

Indeed, one readily finds that solutions of a Riemann problem for \eqref{eq2}, joining arbitrary left and right states $(h_L, \varphi_L)$ and $(h_R, \varphi_R)$ consist of a contact discontinuity given by a jump in $\varphi$ from $\varphi_L$ to $\varphi_R$  with $h=h_L$ held fixed, and propagating at speed $c=u(h_L)=h_L^{1/2}$, followed by a scalar shock or rarefaction wave in $h$ with $\varphi=\varphi_R$ held fixed, and propagating with speed $\geq \alpha_*(h_L)> u(h_L)$ in the rarefaction case $h_L<h_R$ and $c=(q(h_R)-q(h_L))/(h_R-h_L)> (q(h_L)-q(0))/h_L= u(h_L)$ in the shock case $h_L \geq h_R$.

Thus, following the formalism of \cite{W,L}, in situations of hydrodynamic stability,  one might expect, at least for near-equilibrium (small data) flow, that long-time asymptotic behavior of (RG) should be governed by a regularized version of that of \eqref{eq2}, that is, a superposition of relaxation profiles for a Burgers-type shock (or rarefaction) in the genuinely nonlinear $h$-mode and a contact discontinuity in the linearly degenerate $\varphi$ mode, arranged in order of increasing speed so as to form a noninteracting pattern of waves connecting equilibrium states. See for example, the corresponding analyses for the $2n\times 2n$ Jin-Xin model in \cite{HPW,Zh} under the assumption of strict hydrodynamic stability. We note that, unlike shock profiles, the contact profiles as constructed in \cite{HPW,Zh} are not traveling waves, but approximately self-similar solutions with diffusive, error-function scaling.

In the present, degenerate case of {\it neutral hydrodynamic stability}, the conclusion is less clear. It is straightforward to see that the Riemann problem \cite{Bre,Da}
for \eqref{eq2} has a unique solution consisting of a contact discontinuity in $\varphi$, with $h$ held fixed, followed by a shock or rarefaction in $h$, with $\varphi$ held fixed. And, for the shock case relevant to hydrodynamic engineering, that height $h$ is decreasing from left to right, there is a unique relaxation profile, as shown in \cite{RYZ}, connecting equilibrium states with a common value of $\varphi$.

However, as we show below, there is a large class of traveling convective-wave solutions with $U\equiv c$ that are candidates for relaxation profiles of a contact discontinuity. Moreover, as the $\varphi$ equation is decoupled from the rest of the system, it lacks the ``effective diffusion'' present in \cite{HPW,Zh}, so that one cannot expect the type of diffusive contact profiles studied there. There is also the interesting question what occurs in the case of hydrodynamic {\it instability} $F_*>2$ in terms of the linearly degenerate $\varphi$ field.
We investigate these questions in the remainder of the paper.

\section{Convective traveling waves}\label{s:convective}
The above discussion in Section \ref{s:full} motivates the study of constant-speed solutions of \eqref{rg4} with $U\equiv c=\const$, in particular those connecting equilibrium states at $x=\pm \infty$.

Let us start by considering general solutions with $U\equiv c$, without regard to asymptotic states at infinity. Under this assumption, system \eqref{rg4} reduces for smooth solutions to $(h,\varphi)_t+ c(h,\varphi)_x=0$, together with 
\begin{align*}
\d_x\left(g'\frac{h^2}{2} + (\Phi+ \varphi)h^3\right)&=\hat gh-C|c|c\,,\\
\frac12\,h^3\,\left(\d_t\Phi+c\,\d_x\Phi\right)
+c\,\d_x\left(g'\frac{h^2}{2} + (\Phi+ \varphi)h^3\right)&=\left(\hat gh-C_f|c|c\right)\,c\,,
\end{align*}
which imply
\[
\frac12 h^3\,\left(\d_t\Phi+c\,\d_x\Phi\right)
\,=\,c\,(C_t-C_f)\,\frac{\Phi}{\Phi+\varphi}\,.
\]

From the latter one readily deduces the following lemma.
\bl
If $C_t\neq C_f$ and $c>0$, solutions with $U$ constant that are global and bounded (forward in time) satisfy
\[
\lim_{t\to\infty} \|\Phi(t,\cdot)\|_{L^\infty}=0\,.
\] 
Furthermore if $C_t> C_f$ such solutions satisfy $\Phi\equiv 0$ and are unstable.
\el

This motivates to restrict further to $(U,\Phi)\equiv (c,0)$. This amounts to 
\begin{align}\label{twave}
(h,U,\Phi, \varphi)&=(\bar h, \bar U, \bar \Phi, \bar \varphi)(x-ct)\,,&
\textrm{with }(\bar U, \bar \Phi)\equiv(c,0)\,, 
\end{align}
and
\be\label{scalarode}
\left(g'\frac{\bar h^2}{2} +  \bar\varphi \bar h^3\right)' = \hat g\bar h -C_f |c|c.
\ee
That is, (i) {\it any such smooth solution} is a traveling wave, and (ii) every solution of \eqref{scalarode} yields a corresponding smooth traveling-wave solution of \eqref{rg4}, convected with fluid velocity. We denote these as {\it convective-wave} solutions.

\subsection{Piecewise smooth solutions} We now generalize the discussion to (piecewise smooth) discontinuous solutions with $(U,\Phi)\equiv (c,0)$. The foregoing arguments extend directly to smooth parts of the solutions. In turn at a discontinuity traveling with speed $s$, we find that the associated Rankine-Hugoniot conditions reduce to
\begin{align}\label{redRH1}
(s-c)[h]&=0\,,&
(s-c)[h\varphi]&=0\,,
\end{align}
together with
\be\label{redRH2}
[p]=\left[g'\frac{h^2}{2}+\varphi h^3\right]=0.
\ee
That is, we find that $s=c$, i.e., discontinuities are likewise convected with fluid velocity, so that solutions are again traveling waves, and that \eqref{scalarode} again holds, now in distributional sense. Thus, {\it convective-wave solutions}, whether smooth or piecewise smooth, {\it are completely described by \eqref{scalarode}.}

\subsection{Asymptotically constant solutions}

We now specialize the discussion to convective-waves with asymptotic limits $(h_\pm,\varphi_\pm)$ at $\pm\infty$. Note that from \eqref{scalarode} stems
\[
h_\pm=h_0:= \frac{C_f c^2}{\hat g}\,.
\]

Setting $\delta:=\bar h-h_0$, we obtain finally
\be\label{fode}
\left(g'\frac{(h_0+\delta)^2}{2} +  \bar\varphi (h_0+\delta)^3\right)' = \hat g\delta.
\ee
For $\delta $ integrable but otherwise arbitrary\footnote{We deliberately omit to specify positivity constraints.} (smooth or not), this has nontrivial asymptotically-constant solutions, given by
\be\label{weird}
\left(g'\frac{(h_0+\delta)^2}{2} +  \bar\varphi (h_0+\delta)^3\right)(x) = \kappa+ \hat g \int_{-\infty}^x \delta(y)\dD y,
\ee
$\kappa$ an arbitrary constant.
The convective-wave solutions so constructed have limiting asymptotic states
$(h_0, c, 0,\varphi_-)$ and $(h_0, c, 0, \varphi_+)$, with $(\varphi_-,\varphi_+)$ arbitrarily tunable by adjusting $\kappa$ and $\int_{-\infty}^{+\infty}\delta(x)\dD x$.

\br\label{nosv}
For the Saint-Venant equations \eqref{sv}, solutions with $U\equiv c= \const$ must likewise be traveling waves moving with speed $c$, but now satisfying 
$(p(h))'= \hat g h- C_f c^2$ with $p(h)= g' h^2/2$, or
$ h'= \big(\frac{g'}{\hat g}- \frac{C_f c^2}{g'h}\big),  $ an ODE with no nontrivial bounded solutions other than the single unstable equlibrium $h_0=\frac{C_fc^2}{\hat g}$.
Thus, no such asymptotically constant $U\equiv c$ waves exist (except for the constant equilibrium). Likewise, for the reduced model \eqref{rg3}, with $\varphi$ constant, we obtain $(U,\Phi)\equiv (c, 0)$ and $(p(h))'= \hat g h- C_f c^2$, $p(h)=g'h^2/2 + \varphi h^3$, leading to $ h'= \big(\frac{\hat g h- C_f c^2}{g'h+ 3\varphi h^2}\big):  $
again an ODE with a single unstable equilibrium, hence no nontrivial asymptotically constant waves.
\er

\br\label{rk:varphi}
Traveling waves with $\bar U$ not constant, have $\bar\varphi$ constant. Therefore the role of convective waves in a possible large-time traveling-wave resolution is precisely to convey $\varphi$-variations. Consistently the present analysis shows that one may indeed find an infinite-dimensional family of convective-wave fronts connecting any equilibria differing only by their $\varphi$-component.
\er

\br\label{rk:Phizero}
When motivating the restriction to solutions with $\Phi\equiv0$, we have excluded the case $C_t=C_f$ as somehow exceptional. Indeed, it is straightforward to check that in this special case, the freedom in convective-wave profiles is even larger. For instance, one may pick $h_0$, $\delta$ and $\kappa$ as above, but also pick $\bar\Phi$ arbitrarily and obtain a convective-wave profile by solving in $\bar \varphi$
\[
\left(g'\frac{(h_0+\delta)^2}{2} +  (\bar\varphi+\bar \Phi) (h_0+\delta)^3\right)(x) = \kappa+ \hat g \int_{-\infty}^x \delta(y)\dD y\,.
\]
\er

\subsection{Periodic solutions and beyond}
Evidently, in the same way, we may construct spatially periodic solutions analogous to roll waves by integrating \eqref{fode} with $\delta$ periodic, and zero mean. 

Note that this includes cases when such periodic waves are smooth. In contrast, the arguments in Remark \ref{nosv} show that there are no such convective periodic traveling waves for \eqref{sv} or \eqref{rg3}.

One may generalize the present construction so as to unify it with the asymptotically-constant case by noticing that we only need $\delta$ and an anti-derivative of $\delta$ to be bounded so as to obtain a bounded convective-wave.

\br\label{rk:stab}
The freedom in the construction includes the possibility to prescribe any discrete set as the set of discontinuous points for $\bar h$, discontinuities in $\bar\varphi$ being included in those. See the related discussion in \cite{JNRYZ,DR2}. However we stress that in principle it could well be that among this tremendously huge number of convective-wave solutions only a few types are stable. To exemplify this possibility, we point out that it follows from the analysis in \cite{DR1,DR2} that such a dramatic reduction does occur for scalar balance laws. A consequence of our spectral analysis is that for (RG) such a reduction does not occur, at least for asymptotically constant waves with a finite number of discontinuities.
\er

\section{Spectral stability of smooth solutions}\label{s:stab-smooth}

We now investigate spectral stability of convective waves, starting with smooth asymptotically constant solutions. 

\br
Note that unlike what happens in more standard traveling-wave analyses smoothness and localization of wave profiles is not determined by profile equations but could be tuned arbitrarily by the prescription of $\delta=h-h_0$. Since we believe that the only important disctinction from the point of view of applications is whether $\delta$ is continuous or not, in the present section, devoted to the smooth case, we assume that $\delta\in\mathcal{C}^\infty$ and, in the asymptotically constant case, that $\delta$ and all its derivatives are exponentially localized. The reader interested in relaxing this assumption may adapt arguments in \cite{Pego-Weinstein} to the situation at hand.
\er

\subsection{Eigenvalue equations} 
To begin with we investigate the eigenvalue problem.

Linearizing about a convective-wave solution 
\begin{align*}
(h,U,\varphi,\Phi)(t,x)&=(\bar h,\bar U,\bar \varphi,\bar \Phi)(x-ct),&(\bar U,\bar\Phi)\equiv (c,0),
\end{align*}
we obtain a linear evolution whose eigenvalue equations are
\be\label{mateval}
\lambda A^0 W+ (AW)'=EW,
\ee
where $W=(h,U, \Phi, \varphi)$, 
\ba  
A^0&=\left(\begin{array}{cccc} 1 & 0 & 0 & 0\\[0.25em]
c & \bar{h} & 0 & 0\\[0.25em]
\frac{c^2}{2}+\frac{3\bar{\varphi} \bar{h}^2}{2}+g'\bar{h} & c\bar{h} & \frac{\bar{h}^3}{2} & \frac{\bar{h}^3}{2}\\[0.25em]
\bar{\varphi}  & 0 & 0 & \bar{h} \end{array}\right)\\[0.25em]
A^1&=\left(\begin{array}{cccc} c & \bar{h} & 0 & 0\\[0.25em]
c^2+3\bar{\varphi} \bar{h}^2+g'\bar{h} & 2c\bar{h} & \bar{h}^3 & \bar{h}^3\\[0.25em] \frac{c\left(c^2+9\bar{\varphi} \bar{h}^2+4g'\bar{h}\right)}{2} & \frac{\bar{h}\left(3c^2+3\bar{\varphi} \bar{h}^2+2g'\bar{h}\right)}{2} & \frac{3c\bar{h}^3}{2} & \frac{3c\bar{h}^3}{2}\\[0.25em]
c\bar{\varphi}  & \bar{h}\bar{\varphi}  & 0 & c\bar{h} \end{array}\right)\\[0.25em]
E&=\left(\begin{array}{cccc} 0 & 0 & 0 & 0\\[0.25em]
\hat{g} & -2C_fc & -\frac{c^2\left(C_T-C_f\right)}{\bar{\varphi} } & 0\\[0.25em]
c\hat{g} & \hat{g}\bar{h}-3C_fc^2 & 0 & 0\\ 0 & 0 & 0 & 0 \end{array}\right)\\[0.25em]
A=A^1-cA^0&=\left(\begin{array}{cccc} 0 & \bar{h} & 0 & 0\\[0.25em]
\bar{h}\left(g'+3\bar{h}\bar{\varphi} \right) & c\bar{h} & \bar{h}^3 & \bar{h}^3\\[0.25em] c\bar{h}\left(g'+3\bar{h}\bar{\varphi} \right) & \bar{h}\left(\frac{c^2}{2}+\frac{3\bar{\varphi} \bar{h}^2}{2}+g'\bar{h}\right) & ch^3 & c\bar{h}^3\\[0.25em]
0 & \bar{h}\bar{\varphi}  & 0 & 0 \end{array}\right)
\ea 
and $A$ has a kernel of dimension $2$.  Taking the inner product with the left kernel of $A$ thus gives two algebraic relations between the variables, by which we may reduce the eigenvalue problem to a $2\times 2$ ODE.

Specifically, subtracting $\bar{\varphi}$ times the first equation from the fourth equation in \eqref{mateval} yields
\[
\lambda \varphi + \bar \varphi_x U=0,
\]
which gives one of the algebraic relations.  From this we may solve for $\varphi$ in terms of $U$:
\be
\label{eqvarphi}
\varphi=-\lambda^{-1} \bar \varphi_x U.
\ee 
This corresponds to left zero-eigenvector $\ell_1=(-\bar{\varphi},0,0,1)$ of $A$.
Likewise, for $\ell_2$ equal to the other left zero-eigenvector of $A$,
we obtain a relation 
\[
\ell_2 (\lambda A^0 +A'-E)w= 0.
\]
Taking $\ell_2=\left(c^2-3\bar\varphi \bar h^2-2g'\bar{h} , -2c ,2 , 0 \right)$ and using \eqref{eqvarphi}, this equation yields
\be 
\label{eqPhi1}
\Phi=-\frac{2\bar\varphi \left(C_fc^2-\hat{g}\bar{h}+\bar{h}^3\bar{\varphi}_{x}+3\bar{h}^2\bar{h}_{x}\bar{\varphi} +g'\bar{h}\bar{h}_{x}\right)}{2(C_f-C_T)c^3+\bar{h}^3\lambda \bar{\varphi} }U
\ee 
or
\be 
\label{eqPhi2}
\Phi=-\frac{2\bar\varphi \left(C_fc^2-\hat{g}\bar{h}+\left(\bar{h}^3\bar{\varphi} +\frac{1}{2}g'\bar{h}^2\right)'\right)}{2(C_f-C_T)c^3+\bar{h}^3\lambda \bar{\varphi} }U
\ee 
By \eqref{scalarode}, \eqref{eqPhi2} implies 
\be\label{eqPhi}
\Phi\equiv0.
\ee

The full eigenvalue equations \eqref{mateval} thus reduce to the $2\times 2$ ODE
\be 
\left(\left[\begin{array}{cc} 0 & \bar{h}\\ \bar{h}\left(g'+3\bar{h}\bar{\varphi} \right) & c\bar{h}-\frac{1}{\lambda}\bar{h}^3\bar{\varphi}_x \end{array}\right]\left[\begin{array}{c} h\\ U \end{array}\right]\right)'=\left(\begin{array}{cc} -\lambda  & 0\\ \hat{g}-c\lambda  & -2C_fc-\bar{h}\lambda  \end{array}\right)\left[\begin{array}{c} h\\ U \end{array}\right],
\ee 
or
\ba 
\label{2by2}
&\left[\begin{array}{cc} 0 & \bar{h}\\ \bar{h}\left(g'+3\bar{h}\bar{\varphi} \right) & c\bar{h}-\frac{1}{\lambda}\bar{h}^3\bar{\varphi}_x \end{array}\right]\left[\begin{array}{c} h\\ U \end{array}\right]'\\
=&\left[\begin{array}{cc} -\lambda  & -\bar{h}_{x}\\ \hat{g}-g'\bar{h}_{x}-6\bar{\varphi}\bar{h}\bar{h}_{x}-3\bar{h}^2\bar{\varphi}_{x}-c\lambda  & -2C_fc-c\bar{h}_{x}+\frac{3}{\lambda}\bar{h}^2\bar{h}_x\bar{\varphi}_x+\frac{1}{\lambda}\bar{h}^3\bar{\varphi}_{xx}-\bar{h}\lambda  \end{array}\right]\left[\begin{array}{c} h\\ U \end{array}\right].
\ea 

Finally, to adapt Sturm-Liouville type arguments from \cite{SYZ}, we observe that equation \eqref{2by2} may for any $\lambda\neq 0$ be reduced to second-order scalar form
\begin{align}\label{Ueq_aux}
h&=\frac{(\bar{h}U)'}{-\lambda},\\
\label{Ueq}
U''&+f_1 U'+(f_2\lambda^2+f_3\lambda+f_4)U=0,
\end{align}
where 
\ba \label{fs}
&f_1=\frac{4\,\bar{\varphi}_{x}\,\bar{h}^2+12\,\bar{h}_{x}\,\bar{\varphi} \,\bar{h}-\hat{g}+3\,g'\,\bar{h}_{x}}{\bar{h}(g'+3\,\bar{h}\,\bar{\varphi}) },
&f_2&=-\frac{1}{\bar{h}(g'+3\,\bar{h}\,\bar{\varphi})}<0,\\
&f_3=-\frac{2\,C_f\,c}{\bar{h}^2\,\left(g'+3\,\bar{h}\,\bar{\varphi} \right)}<0,
& f_4&=\frac{(\bar{\varphi}\bar{h}^3)''+g'\,\bar{h}_{xx}\,\bar{h}+g'\,{\bar{h}_{x}}^2-\hat{g}\,\bar{h}_{x}}{\bar{h}^2\,\left(g'+3\,\bar{h}\,\bar{\varphi} \right)}=0
\ea 
where $f_4=0$ is obtained from differentiating \eqref{weird} twice. 

{\it System \eqref{Ueq} with \eqref{eqvarphi}, \eqref{eqPhi} and \eqref{Ueq_aux} is, evidently, equivalent to \eqref{mateval} for all $\lambda\neq 0$}.

\subsection{Removal of degenerate modes}\label{s:degen}

We now show how the computations of the former subsection may be put in an a more functional-analytic framework so as to yield corresponding reductions at the spectral level, that is, for resolvent problems, as required by abstract semigroup theory.

The linearized dynamics obeys 
\be\label{eq:linearized}
A_0\d_t W+\d_x(A\,W)\,=\,E\,W
\ee
with $A_0$, $A$ and $E$ as above. By applying the invertible 
\[
\bp
\begin{matrix} 1&0&0&0\\
-c\,\bar{h}^{-1}&\bar{h}^{-1}&0&0\end{matrix}\\
\bar{h}^{-3}\ell_2-\bar{h}^{-1}\ell_1\\
\bar{h}^{-1}\ell_1
\ep
\]
one shows that \eqref{eq:linearized} is equivalently written as
\[
\d_t\bp h\\U\\\Phi\\\varphi\ep\,=\,L\bp h\\U\\\Phi\\\varphi\ep
\]
where 
\[
L\bp h\\U\\\Phi\\\varphi\ep
\,=\,\bp
-A_{red}\d_x\bp h\\U\ep
+E_{red}\bp h\\U\ep
+\bp 0\\-\bar{h}^{-1}\,\d_x\left(\bar{h}^3\,(\Phi+\varphi)\right)
-\frac{c^2\left(C_T-C_f\right)}{\bar{\varphi}\bar{h}}\Phi
\ep\\
-\frac{2(C_f-C_T)c^3}{\bar{h}^3\bar{\varphi}}\Phi\\
-\bar{\varphi}_x\,U
\ep
\]
with
\begin{align*}
A_{red}&=\bp0&\bar{h}\\
g'+3\bar{h}\bar{\varphi}&0\ep\,,&
E_{red}&=\bp 0&-\bar{h}_x\\
\bar{h}^{-1}\,
\left(\hat{g}-\d_x\left(\bar{h}\left(g'+3\bar{h}\bar{\varphi} \right)\right)\right)
&-2C_fc\,\bar{h}^{-1}\ep\,.
\end{align*}
The formal operator $L$ may be turned into a densely-defined closed operator on a Banach space $X$, with domain $D$, for various pairs $(X,D)$ of the form $X=X_0\times X_1$, $D=X_1\times X_1$, including those arising from $X_0=(W^{k,p}(\R))^2$ and $X_1=(W^{k+1,p}(\R))^2$ for any $(k,p)\in\N\times [1,\infty)$, or from $X_0=(BUC^{k}(\R))^2$ and $X_1=(BUC^{k+1}(\R))^2$ for any $k\in\N$ (where $BUC^\ell$ denotes functions whose derivatives up to order $\ell$ are bounded and uniformly continuous).

By definition, $\lambda$ does not belong to the spectrum of $L$ if and only if for any $F\in X$ there exists a unique $W\in D$ such that $(\lambda-L)W=F$, and the corresponding solution operator $(\lambda-L)^{-1}$ is bounded on $X$. 

\begin{definition}\label{def:spec}
An asymptotically-constant smooth convective-wave will be called spectrally stable (on the functional space $X$) provided that the spectrum of the corresponding $L$ is included in $\{\,\lambda\,;\,\, \Re(\lambda)\leq 0\,\}$. It is called strongly spectrally stable provided that the foregoing spectrum is included in $\{\,\lambda\,;\,\, \Re(\lambda)< 0\,\}\,\cup\,\{0\}$.
\end{definition}

The definition of strong spectral stability is motivated by the fact that in any case $0$ belongs to the spectrum. 

For the above-mentioned functional framework, one checks readily that $\lambda$ does not belong to the spectrum of $L$, if and only if $\lambda\neq0$,
\be\label{eq:spec-algebraic}
\lambda\notin -\overline{\frac{2(C_f-C_T)c^3}{\bar{h}^3\bar{\varphi}}(\R)}
\ee
and for any $G\in X_0$, there exists a unique $V\in X_1$ such that $L_{red}(\lambda)V=G$ with a solution operator $(L_{red}(\lambda))^{-1}$ bounded on $X_0$, where
\begin{align*}
&L_{red}(\lambda)\bp \tau\\U\ep
:= A_{red}\d_x\bp \tau\\U\ep
+\bp 1&0\\0&\lambda\ep(\lambda-E_{red})\bp \lambda^{-1}&0\\0&1\ep\bp \tau\\U\ep
+\bp 0\\-\bar{h}^{-1}\,\d_x\left(\bar{h}^3\,\bar{\varphi}_x\,U\right)\ep\\
&=\bp1&\d_x(\bar{h}\cdot)\\
\d_x\left(\left(g'+3\bar{h}\bar{\varphi} \right)\cdot\right)
-\bar{h}^{-1}\,
\left(\hat{g}-\bar{h}_x\left(g'+3\bar{h}\bar{\varphi} \right)\right)
&\lambda^2+\lambda\,2C_fc\,\bar{h}^{-1}
-\bar{h}^{-1}\,\d_x\left(\bar{h}^3\,\bar{\varphi}_x\,\cdot\right)\ep\bp \tau\\U\ep.
\end{align*}

This achieves the reduction from the original spectrum problem to the invertibility of a reduced operator $L_{red}(\lambda)$, whose kernel equation is precisely \eqref{2by2} for $(\tau,U)=(\lambda\,h,U)$. Note that, as the original $\lambda-L$, $L_{red}(\lambda)$ is a closed densely defined operator depending analytically on $\lambda$. 

A few remarks are in order to highlight what is the nature of the gain when going from $\lambda-L$ to $L_{red}(\lambda)$. 

\br
The original operator $L$ combines purely algebraic parts, that is, parts given as multiplication operators, with differential parts. The key point is that multiplication operators are Fredholm of index $0$ only when they are invertible so that the spectrum of a multiplication operator is reduced to its essential spectrum. Moreover the latter is determined by all the values of the function by which we multiply\footnote{See for instance \eqref{eq:spec-algebraic}.}, instead of being read on asymptotic limits at $\pm\infty$. This is this kind of degeneracy that is responsible for the fact that there is an uncountable family of zero eigenmodes of \eqref{mateval}, obtained through $(U,\Phi)\equiv (0,0)$ and
\[
( \bar h(g'+3\bar h\bar \varphi)h + \bar h^3 \varphi)'=\hat g h,
\]
the linear equivalent of the nonlinear solutions determined by $(\bar U,\bar \Phi)\equiv (c,0)$ and \eqref{weird}. By factoring out the algebraic part, we effectively remove this kind of degeneracy. However, it must be remembered that the full system does possess these degenerate modes, a fact with implications for nonlinear stability and asymptotic behavior ; see Section~\ref{s:disc} for further discussion. The reduced operators $L_{red}(\lambda)$ are one-dimensional\footnote{This is crucial since first-order operators possess elliptic properties only in dimension $1$.} non-characteristic differential operators with asymptotic limits at $\pm\infty$ (reached sufficiently fast) so that, as we detail below, in a large region of the spectral plane --- purely determined by asymptotic limits ---, these operators are Fredholm of index $0$, thus their invertibility is equivalent to their one-to-one character, hence to \eqref{Ueq} possessing no non trivial solution. Incidentally we point out that the failure of the non-characteristic requirement would bring other kinds of degeneracies; see the detailed discussion in \cite{DR2}. 
\er

\br
The crucial part of the reduction is the elimination of the algebraic parts. But we also had to perform a suitable scaling, including $h\mapsto \lambda h=:\tau$, so as to preserve regularity near $\lambda=0$. This scaling shares similarities with classical ``flux-type'' transformations, considered for instance in \cite{PZ}, and that may be thought as spectral counterparts to the widely-used anti-derivative trick --- dating back at least to \cite{Matsumura-Nishihara,Goodman_bis} --- introduced to remove the (non-degenerate) eigenvalue $0$. For more advanced, multi-dimensional, versions of the ``flux-type'' transformations we refer the reader to \cite{HLyZ2,BHLyZ}. More generally, the need for such types of scaling also arise when connecting at spectral level Eulerian and mass-Lagrangian formulations or quantum systems and their hydrodynamic formulations ; see \cite[Section~5.2]{BMR} on the former and \cite[Section~3.2]{Audiard-Rodrigues} on the latter.
\er

\subsection{Standard spectral stability} \label{s:smooth_as_const}

We now investigate invertibility properties of $L_{red}(\lambda)$. The goal is in a relevant region of the spectral plane to reduce it to invertibility properties of its spatial asymptotic limits
\begin{align*}
L_{red}^{\pm}(\lambda)\bp \tau\\U\ep
&:=\bp1&h_0\d_x\\
\left(g'+3h_0\varphi_{\pm}\right)\d_x
-h_0^{-1}\,\hat{g}
&\lambda^2+\lambda\,2C_fc\,h_0^{-1}\ep\bp \tau\\U\ep\,.
\end{align*}

Our strategy uses classical arguments to show that this reduction holds when \eqref{Ueq} possesses no non trivial solution, and then adapts arguments from \cite{SYZ} to show that the latter does hold in the spectral region of interest. On the former classical arguments we give little detail and rather refer the reader to the already classical \cite{Z-H,Z3,MZ,Sandstede,KapitulaPromislow-stability} for detailed comprehensive exposition and to the recent \cite{Blochas-Rodrigues} for a self-contained worked-out case that could hopefully be used as a gentle entering gate.

As a preliminary we point out that the invertibility of $L_{red}^{\pm}(\lambda)$ for any $\lambda$ such that $\Re(\lambda)>0$ (respectively for any $\lambda\neq0$ such that $\Re(\lambda)\geq0$) is equivalent to
\begin{align*}
F_{\pm}:=\sqrt{\frac{\hat{g}}{C_f(g'+3h_0\varphi_\pm)}}&\leq2\,,&
(\textrm{resp. } F_{\pm}<2)\,.
\end{align*}
The foregoing claim follows from the computations involved in the stability analysis for constant states, already worked out in \cite{R,RG1,RG2}, and provided in Appendix~\ref{app:split}. 

\begin{theorem}\label{th:main}
Consider an asymptotically-constant smooth profile with limiting values $(h_0, c, 0,\varphi_-)$ and $(h_0, c, 0, \varphi_+)$, $c>0$, reached exponentially fast. This wave is spectrally stable if and only if
\begin{align*}
C_f&\geq C_t\,,&
F_+&\leq 2\,,&
F_-&\leq 2\,.
\end{align*}
and it is strongly spectrally stable if and only if
\begin{align}\label{hydrodynamicstability}
C_f&\geq C_t\,,&
F_+&<2\,,&
F_-&<2\,.
\end{align}
\end{theorem}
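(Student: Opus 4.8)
\textbf{Proof strategy for Theorem~\ref{th:main}.} The plan is to reduce, via the functional-analytic framework of Section~\ref{s:degen}, the spectral problem for $L$ to the invertibility of the one-dimensional non-characteristic operator $L_{red}(\lambda)$, and then to settle that invertibility through a generalized Sturm--Liouville argument applied to the scalar second-order equation \eqref{Ueq}. First I would record the already-established algebraic reductions: $\lambda\in\spec(L)$ forces either $\lambda=0$, or $\lambda$ in the closure of $-\tfrac{2(C_f-C_T)c^3}{\bar h^3\bar\varphi}(\R)$, or non-invertibility of $L_{red}(\lambda)$. The first contingency is harmless for strong spectral stability; the second is a half-line on the negative real axis precisely when $C_f\geq C_t$ (and intersects $\Re\lambda>0$ or touches the imaginary axis off the origin otherwise), which already pins down the role of the condition $C_f\geq C_t$. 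So the heart of the matter is to characterize, for $\Re\lambda\geq0$, $\lambda\neq0$, when $L_{red}(\lambda)$ fails to be invertible.

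\textbf{Fredholm reduction and consistent splitting.} The next step is the classical spatial-dynamics argument: since the profile is asymptotically constant with exponentially fast convergence, $L_{red}(\lambda)$ is a relatively compact perturbation of its constant-coefficient limits $L_{red}^\pm(\lambda)$, so for $\lambda$ in the region where both $L_{red}^+(\lambda)$ and $L_{red}^-(\lambda)$ are hyperbolic (the ``consistent splitting'' region), $L_{red}(\lambda)$ is Fredholm of index $0$, and invertibility is equivalent to triviality of its kernel, i.e.\ to \eqref{Ueq} having no nontrivial solution in $X_1$. Using the preliminary computation quoted just before the theorem --- that $L_{red}^\pm(\lambda)$ is invertible for $\Re\lambda>0$ iff $F_\pm\leq2$ and for $\lambda\neq0$, $\Re\lambda\geq0$ iff $F_\pm<2$ --- I would check that $\{\Re\lambda>0\}$ (resp.\ $\{\Re\lambda\geq0\}\setminus\{0\}$) lies in the consistent-splitting region exactly under $F_\pm\leq2$ (resp.\ $F_\pm<2$); this is where the endstate Froude conditions enter, and the ``only if'' direction of the theorem follows immediately because essential spectrum of $L$ accumulates on $\spec(L_{red}^\pm(\lambda))$, so a violated endstate condition produces unstable spectrum directly. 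For the ``if'' direction, it remains to exclude point spectrum, i.e.\ to show \eqref{Ueq} has no nontrivial decaying solution for such $\lambda$.

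\textbf{The Sturm--Liouville step (main obstacle).} This is the crux. Exploiting the structure $f_2<0$, $f_3<0$, $f_4=0$ from \eqref{fs}, I would first multiply \eqref{Ueq} by the integrating factor $e^{\int f_1}$ to put it in self-adjoint form $(\rho\,U')' + (f_2\lambda^2+f_3\lambda)\rho\,U = 0$ with $\rho:=e^{\int f_1}>0$. Suppose $U\in X_1$ is a nontrivial solution for some $\lambda$ with $\Re\lambda\geq0$, $\lambda\neq0$; since $U$ decays at both ends (hyperbolicity of the limiting operators under the Froude conditions gives exponential decay), one integrates $\bar U\cdot(\rho U')' + (f_2\lambda^2+f_3\lambda)\rho|U|^2=0$ over $\R$, integrates by parts, and obtains $\int_\R \rho|U'|^2\,dx = (f_2\lambda^2+f_3\lambda)\int_\R\rho|U|^2\,dx$ after accounting for the fact that $f_2,f_3$ are themselves functions (so one carries them inside, writing the identity as $\int \rho|U'|^2 = \lambda^2\int (-f_2)\rho|U|^2 + \lambda\int(-f_3)\rho|U|^2$, with $-f_2,-f_3>0$). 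Taking real and imaginary parts: writing $\lambda=\mu+i\nu$ with $\mu\geq0$, the imaginary part reads $2\mu\nu\int(-f_2)\rho|U|^2 + \nu\int(-f_3)\rho|U|^2 = 0$, i.e.\ $\nu\big(2\mu\int(-f_2)\rho|U|^2 + \int(-f_3)\rho|U|^2\big)=0$; since $\mu\geq0$ and the bracketed quantity is strictly positive (as $\int(-f_3)\rho|U|^2>0$ when $U\not\equiv0$), we get $\nu=0$, so $\lambda=\mu\geq0$ is real. Then the real part gives $\int\rho|U'|^2 = \mu^2\int(-f_2)\rho|U|^2 + \mu\int(-f_3)\rho|U|^2$; this is consistent with $\mu>0$, so I cannot yet conclude --- here I would instead argue directly on the real ODE $(\rho U')' = -(f_2\mu^2+f_3\mu)\rho U = (-f_2\mu^2-f_3\mu)\rho U$ and use a nodal/comparison argument: a nontrivial $L^2$ solution of $(\rho U')'=V\rho U$ with $V=-f_2\mu^2-f_3\mu$ forces, by the standard Sturmian count combined with the asymptotic behavior at $\pm\infty$, a relation to the Evans function / the count of unstable eigenvalues, and the adaptation of \cite{SYZ} shows the absence of such a solution is equivalent to the strict Froude inequality at the endstates --- precisely \eqref{hydrodynamicstability}. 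The honest statement is that this last sub-step is the technical heart: one must transcribe the generalized Sturm--Liouville machinery of \cite{SYZ} (monotonicity of the Evans function along the real axis, matching of Wronskian signs at $\pm\infty$ dictated by $F_\pm$) to the present operator, and the main obstacle is verifying that the sign conventions and the $f_4=0$ simplification make the argument go through cleanly for the whole closed right half-plane minus the origin rather than just a neighborhood of it. Finally, collecting the three obstructions --- the algebraic half-line ($C_f\geq C_t$), the essential spectrum ($F_\pm\leq2$ or $<2$), and the now-excluded point spectrum --- yields the stated equivalences, with the strong-stability version following by tracking strict versus non-strict inequalities throughout and noting $0\in\spec(L)$ always.
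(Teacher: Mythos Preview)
Your framework through the ``Fredholm reduction and consistent splitting'' paragraph matches the paper exactly: the algebraic reduction isolates the $C_f\geq C_t$ condition, the Froude conditions $F_\pm\lessgtr 2$ come from the essential spectrum (invertibility of $L_{red}^\pm(\lambda)$), and sufficiency reduces to showing that \eqref{Ueq} has no nontrivial decaying solution when $\Re\lambda\geq0$, $\lambda\neq0$.

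The gap is in your Sturm--Liouville step, and it is a \emph{sign error} that causes you to abandon a perfectly good argument. From $(\rho U')' + (f_2\lambda^2+f_3\lambda)\rho U=0$ you correctly get, after pairing with $\bar U$ and integrating by parts,
\[
\int_\R \rho|U'|^2 \;=\; \lambda^2\!\int_\R f_2\,\rho|U|^2 \;+\; \lambda\!\int_\R f_3\,\rho|U|^2,
\]
but you then rewrite this with $-f_2$, $-f_3$ in place of $f_2$, $f_3$ without flipping the overall sign. With the correct signs, and since $f_2,f_3<0$, the imaginary part still gives $\nu=0$ as you found, but the real part (for $\lambda=\mu\geq0$) now reads
\[
0\;\leq\;\int_\R \rho|U'|^2 \;=\; \mu^2\!\int_\R f_2\,\rho|U|^2 \;+\; \mu\!\int_\R f_3\,\rho|U|^2 \;\leq\;0,
\]
forcing $\mu=0$ or $U\equiv0$. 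So your direct energy estimate actually \emph{does} close the argument for all of $\{\Re\lambda\geq0\}\setminus\{0\}$ --- no Evans-function monotonicity or nodal count is needed. (One must check that the boundary terms vanish and the integrals converge despite $\rho=e^{\int f_1}$ growing at $-\infty$; this holds because the decaying root at $-\infty$ has real part exceeding $-\tfrac12 f_1(-\infty)$, a consequence of the root sum $\gamma_1+\gamma_2=-f_1(-\infty)$ and consistent splitting $\Re\gamma_1>0>\Re\gamma_2$.)

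For comparison, the paper takes a different route: rather than a direct estimate over $\{\Re\lambda\geq0\}$, it runs a \emph{homotopy} in the profile, replacing $\delta$ by $\epsilon\delta$ for $\epsilon\in[0,1]$ and connecting the given wave to the constant state at $\epsilon=0$. The energy estimate is used \emph{only} on the imaginary axis (via the Liouville transform $w=e^{\tfrac12\int f_1}U$, the extra potential $-\tfrac14 f_1^2-\tfrac12 f_1'$ obstructs a direct conclusion off $i\R$), and a separate ODE argument at $\lambda=0$ (the equation $U''+f_1U'=0$ has no nontrivial bounded-and-decaying solution) rules out eigenvalues emerging through the origin. Your corrected direct approach is arguably cleaner; the paper's homotopy, on the other hand, is the template that is later pushed (via limits of smooth waves) to the discontinuous case in Theorem~\ref{th:disc}. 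Your vague fallback to ``Evans function monotonicity / [SYZ]'' is not what either proof uses --- the paper explicitly departs from \cite{SYZ} for the $\lambda\to0$ step --- and should be dropped once the sign is fixed.
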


\begin{proof}
The role of the condition on $(C_f,C_t)$ has been elucidated in the foregoing subsection. The necessity of the conditions on $F_+$ and $F_-$ stems directly from the classical fact that invertibility of both $L_{red}^{\pm}(\lambda)$ is necessary to invertibility of $L_{red}(\lambda)$. The foregoing claim is relatively easy to prove by building quasi-modes ; see Lemma~2 in the Appendix to \cite[Chapter~5]{He}, or Proposition~2.1 in \cite[Section~2.1]{DR2}. Knowing that $L_{red}^{\pm}(\lambda)$ are invertible when $\Re(\lambda)\geq 0$, $\lambda\neq0$, it is classical to deduce that for such $\lambda$, $L_{red}(\lambda)$ is invertible if and only if it is one-to-one. However the proof of the latter is longer and more involved and we simply refer to the already quoted literature. Let us only mention that the classical proof builds an inverse from Green functions, and that those are obtained by gluing together solutions to the differential equation encoding the kernel equation for $L_{red}(\lambda)$, considered on half-lines. Hence the strong connection between the structure of the kernel equation and invertibility.

We also observe that the sufficiency of the condition for spectral stability may be derived from the sufficiency of the condition for strong spectral stability through a limiting argument that we omit.

Thus, the remaining, and only non classical, task is to prove that when $C_f\geq C_t$, $F_+<2$, and $F_-<2$, and $\Re(\lambda)\geq 0$, $\lambda\neq0$, the kernel of $L_{red}(\lambda)$ is reduced to the null function, or, in other words, that \eqref{Ueq} possesses no non trivial solution (in the relevant functional space). To prove this, we modify the strategy from \cite{SYZ}.

The argument is closed by homotopy. For $\epsilon\in[0,1]$, consider the convective-wave $(\bar{h}_\epsilon,\bar{U}_\epsilon,\bar{\Phi}_\epsilon,\bar{\varphi}_\epsilon)$ built from \eqref{weird} with the same $(c,h_0,\kappa)$ but $\delta$ replaced with $\epsilon\,\delta$. At $\epsilon=0$, the wave is constant with $\bar{h}\equiv h_0$, $\bar{\varphi}_\epsilon\equiv\varphi_-$. The situation is favorable at $\epsilon=0$ thanks to $F_-<2$, and Froude conditions are seen to hold for any $\epsilon\in [0,1]$ by monotonicity in $\varphi_+$ of the criterion. Our task is to show that there is no transition from stability to instability as we vary $\epsilon$. It follows from general theory that transitions may occur only in two ways:
\begin{enumerate}
\item At some $\epsilon_0$ there is a $\lambda\in i\RR$, $\lambda\neq0$, such that the $\epsilon_0$-version of \eqref{Ueq} possesses a non trivial solution (in the relevant functional space).
\item For some $\epsilon_0$, for any $\epsilon>\epsilon_0$ sufficiently close to $\epsilon_0$, there exists $\lambda_\epsilon$ converging to $0$ when $\epsilon$ goes to $\epsilon_0$ such that $\Re(\lambda_\epsilon)>0$ and the corresponding $\epsilon$-version of \eqref{Ueq} possesses a non trivial solution (in the relevant functional space).
\end{enumerate}

We begin by excluding the first kind of transition. For the sake of readability, we omit to mark the $\epsilon_0$ dependence. The first observation is that under the present assumptions, exponential dichotomy holds: when $\Re(\lambda)\geq0$, $\lambda\neq0$, solutions to the differential equation \eqref{Ueq} that do not blow exponentially decay exponentially, with rates matching decaying rates near $\pm\infty$ of asymptotic equations
\begin{align*}
U''-\frac{\hat{g}}{\bar{h}(g'+3\,h_0\,\varphi_\pm)} U'
-\lambda\left(\frac{1}{h_0(g'+3\,h_0\,\varphi_\pm)}\lambda
+\frac{2\,C_f\,c}{h_0^2\,\left(g'+3\,h_0\,\varphi_\pm\right)}\right)U=0\,.
\end{align*}
This implies that any such solution provides via the Liouville-type transformation 
\be\label{Ltrans}
 w(x)=e^{\frac{1}{2}\int_0^x f_1(y)\dD y}U(x),
\ee
an element $w$ of the kernel of the operator $\mathcal{L}(\lambda)$ acting on $L^2$ with domain $H^2$ through
 \be 
 \label{weq}
\mathcal{L}(\lambda)(w):=w''+(f_2\lambda^2+f_3\lambda+f_4-\frac{1}{4}f_1^2-\frac{1}{2}f_1')w\,.
 \ee
To conclude this part, as in \cite[Lemma~3.1]{SYZ}, we derive from $f_3<0$ that when $\lambda\in i\RR$, $\lambda\neq0$, the kernel of $\mathcal{L}(\lambda)$ is trivial. Namely, taking the $L^2$ inner product of $i\,w$ against equation $\mathcal{L}(\lambda)(w)=0$ for $\lambda=i\alpha$, $\alpha\in\R^*$, we obtain $0=\langle w, f_3 w\rangle$, hence $w=0$.
 
It only remains to exclude the second kind of transition. Here we depart from \cite{SYZ}. Our first observation is that similarly classical arguments, building on limits of spatial decay rates, show that this transition may only occur if at $\epsilon_0$, the $\epsilon$-version of \eqref{Ueq} possesses at $\lambda=0$ a non zero solution $U$, bounded near $+\infty$ and decaying exponentially near $-\infty$. We now exclude this possibility, and again omit to mark any dependence on $\epsilon_0$ when doing so. Since $f_1(+\infty)>0$, the only bounded solutions to \eqref{Ueq} with $\lambda=0$, that is, to
\be\label{0eq}
U''+f_1 U'=0\,,
\ee
are solutions with $U'\equiv0$, and those do not decay to $0$ at $-\infty$ if they are not constantly zero. Hence the conclusion of this part and of the proof.
\end{proof}

\br\label{blockrmk}
For comparison, we mention that in \cite{SYZ} the argument blocking the emergence of eigenvalues through $\lambda=0$ builds on the fact that there is a non trivial element in the kernel but that its sign combined with Sturm-Liouville theory ensures that the multiplicity of $\lambda=0$ is fixed (equal to $1$), hence no crossing is possible. 
\er

\subsection{Convective spectral stability}\label{s:consubs}

Let us observe that in the foregoing subsection we have not specified which of the functional spaces introduced in \eqref{s:degen} we were picking. The reason is that this choice is immaterial to conclusions of Theorem~\ref{th:main}. The conclusion would be dramatically different if we were allowing to introduce spatial weights among possible Sobolev norms.

With this in mind, we revisit briefly and at a deeper level the hydrodynamic stability conditions $F_+\leq 2$, $F_-\leq 2$, and its role in spectral stability. Though this condition is clearly necessary in standard Sobolev norms, it is also well-known that stability properties may sometimes be changed by working in exponentially weighted norms, taking account of the fact that instabilities may be convected into a traveling wave and stabilized by near-field dynamics. The upshot is that a wave may be stable with respect to sufficiently exponentially localized perturbations even when it is unstable with respect to perturbations in standard norms, a phenomenon known as {\it convective stability} or {\it convective stabilization}. On this topic, we refer to \cite{Sat} for pioneering work, to \cite[Chapter~3]{KapitulaPromislow-stability} for a comprehensive discussion at spectral level and to \cite{GR,FRYZ,Blochas} for convective analogs to \cite{DR1,DR2,YZ,SYZ,Blochas-Rodrigues}.

In order to state a convective analog to Theorem~\ref{th:main}, we extend Definition~\ref{def:spec}, to \emph{convective} spectral stability when spectral stability is obtained in a weighted topology with a smooth weight lower-bounded away from zero, and to \emph{extended convective} spectral stability when it is met for some smooth positive weight.

The weight $e^{\frac{1}{2}\int_0^x f_1(y)\dD y}$, already used in \eqref{Ltrans}, ensures invertibility properties for $L_{red}^\pm(\lambda)$ for any $\lambda\neq 0$ such that $\Re(\lambda)\geq0$ independently of the position of $F_\pm$ with respect to $2$. Once this is done the rest of the proof of Theorem~\ref{th:main} applies almost without change. Note that the above weight goes to zero as $x\to\infty$ and an inspection of spatial decay at $+\infty$ shows that this cannot be fixed by any other lower-bounded weight ; see Appendix~\ref{app:split} for some details. However $e^{\frac{1}{2}\int_0^x \chi(y)\,f_1(y)\dD y}$ with $\chi$ smooth, equal to $1$ in a neighborhood of $-\infty$ and to $0$ in a neighborhood of $+\infty$, provides a lower-bounded weight that restores invertibility of $L_{red}^-(\lambda)$ in the zone of interest. 

The foregoing discussion leads to the following theorem.

\begin{theorem}\label{th:conv}
Consider an asymptotically-constant smooth profile with limiting values $(h_0, c, 0,\varphi_-)$ and $(h_0, c, 0, \varphi_+)$, $c>0$, reached exponentially fast.
\begin{enumerate}
\item Its extended convective spectral stability requires $C_f\geq C_t$, and this is sufficient to obtain strong extended convective spectral stability.
\item Its convective spectral stability is equivalent to $C_f\geq C_t$ and $F_+\leq 2$, and its strong convective spectral stability is equivalent to $C_f\geq C_t$ and $F_+< 2$.
\end{enumerate}
\end{theorem}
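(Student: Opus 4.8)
The plan is to follow the same homotopy architecture used in the proof of Theorem~\ref{th:main}, but now carrying the weighted-space bookkeeping through each step, and to treat the two assertions separately according to which endstate's asymptotic operator $L_{red}^\pm(\lambda)$ has been repaired by the weight.

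First I would set up the weighted framework precisely. Conjugating $L_{red}(\lambda)$ by a smooth positive weight $\varpi$ amounts, at the level of the scalar reduction \eqref{Ueq}--\eqref{weq}, to studying the Liouville-transformed operator $\mathcal{L}(\lambda)$ with $f_1$ replaced by $f_1-2(\varpi'/\varpi)$; the essential spectrum of $L_{red}^\pm(\lambda)$ is then controlled by the exponential rates $\varpi$ imposes at $\pm\infty$ together with the asymptotic constants in \eqref{fs}. Since $f_1(+\infty)>0$ while $f_1(-\infty)$ has the sign dictating the $F_-$ condition, the key computation — reproduced from Appendix~\ref{app:split} — is that the weight $e^{\frac12\int_0^x f_1}$ exactly cancels the first-order coefficient at both ends and renders $L_{red}^\pm(\lambda)$ invertible for all $\lambda\neq0$, $\Re\lambda\geq0$, regardless of $F_\pm$ versus $2$; whereas a \emph{lower-bounded} weight can only afford such repair at $-\infty$ (where the offending rate has the ``wrong'' sign but can be beaten by bounded exponential growth of $\varpi$) and never at $+\infty$ (the profile decays into a region where $f_1>0$ forces a rate no bounded weight can overcome). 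This dichotomy is precisely the split between ``extended convective'' and ``convective'' in the statement.

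With that in hand, for item~(1) I would use the full weight $e^{\frac12\int_0^x\chi_\pm f_1}$ (with cutoffs isolating each end), so that both $L_{red}^\pm(\lambda)$ are invertible on the right half-plane minus the origin; the Fredholm-index-zero reduction from \cite{Z-H,MZ,Sandstede} then says $L_{red}(\lambda)$ is invertible iff its kernel is trivial, i.e.\ iff the weighted version of \eqref{weq} has no nontrivial $L^2$ solution. The homotopy in $\epsilon$ from the constant state then runs verbatim: the $\lambda\in i\RR\setminus\{0\}$ transition is blocked by the $\langle w,f_3 w\rangle=0$ argument (note $f_3<0$ is unchanged by conjugation, since the weight only modifies the first-order term), and the $\lambda\to0$ transition is blocked exactly as before — at $\lambda=0$ equation \eqref{0eq} becomes $w''+(-\tfrac14\hat f_1^2-\tfrac12\hat f_1')w=0$ with $\hat f_1=f_1-2(\varpi'/\varpi)$, whose bounded solutions on the repaired side still fail to have the required one-sided exponential decay. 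For the necessity of $C_f\geq C_t$: the algebraic constraint \eqref{eq:spec-algebraic} is untouched by any spatial weight — it is a pure multiplication-operator obstruction — so $C_f<C_t$ places a whole ray of spectrum in $\Re\lambda>0$ no matter the topology; this is where I would be careful, since it is the only part that does not reduce to the scalar ODE.

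For item~(2), the weight must be lower-bounded, so only $L_{red}^-(\lambda)$ can be repaired; $L_{red}^+(\lambda)$ retains its genuine essential spectrum, and by the quasi-mode construction (Lemma~2 in the Appendix to \cite[Chapter~5]{He}, or \cite[Prop.~2.1]{DR2}) its invertibility on $\overline{\Re\lambda\geq0}\setminus\{0\}$ is equivalent to $F_+<2$ (resp.\ $F_+\leq2$ for the non-strong version) — this forces the necessity direction. For sufficiency, assuming $C_f\geq C_t$ and $F_+<2$, use the cutoff weight $e^{\frac12\int_0^x\chi f_1}$ repairing the left end while leaving the right end with its (now favorable, by $F_+<2$) native rate; both asymptotic operators are invertible, and the same $\epsilon$-homotopy closes — the $i\RR$ step again via $f_3<0$, the $\lambda\to0$ step again via the sign of $f_1$ near $+\infty$ (which is what $F_+<2$ encodes) forcing bounded kernel elements of \eqref{0eq} to be constant. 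I expect the main obstacle to be purely organizational rather than conceptual: keeping straight, across the two items and the strong/non-strong variants, exactly which of $L_{red}^\pm$ is being controlled by the weight versus by a Froude inequality, and verifying that the exponential-dichotomy/gluing machinery behind ``invertible $\iff$ one-to-one'' survives conjugation by a weight that is constant at one end and exponentially unbounded-but-with-bounded-reciprocal at the other; the limiting argument deducing the non-strong statements from the strong ones proceeds as in Theorem~\ref{th:main} and I would again omit it.
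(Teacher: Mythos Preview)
Your proposal is correct and takes essentially the same approach as the paper: choose the Liouville weight $e^{\frac12\int_0^x f_1}$ (for extended convective stability) or its one-sided cutoff $e^{\frac12\int_0^x\chi f_1}$ with $\chi\equiv1$ near $-\infty$ and $\chi\equiv0$ near $+\infty$ (for ordinary convective stability) to repair the essential spectrum at the relevant endstates, then re-run the homotopy argument of Theorem~\ref{th:main} verbatim, with the $C_f\geq C_t$ necessity read off the weight-independent algebraic constraint \eqref{eq:spec-algebraic} and the $F_+$ necessity from the consistent-splitting analysis of Appendix~\ref{app:split}. One minor slip worth correcting: $f_1(\pm\infty)=-\hat g/\bigl(h_0(g'+3h_0\varphi_\pm)\bigr)$ is always \emph{negative} regardless of the Froude number, so the $\lambda\to0$ blocking works unconditionally and $F_+<2$ is not encoded in the sign of $f_1$ --- it enters only through the splitting of the spatial eigenvalues $\gamma_{1,2}^{\varphi_+}(\lambda)$ in the appendix.
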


\br
The outcome of the analysis is consistent with the classical rule of thumb that only instabilities traveling towards the wave may be stabilized with a classical weight. To carry out this consistent check, we first recall \cite{MZ} that transition to hydrodynamic instability is closely related to the Chapman-Enskogg expansion \eqref{eq2}, with instabilities occurring in neutral modes corresponding to the characteristic modes of \eqref{eq2}. As the $\varphi$ mode decouples to all orders, it remains always neutral, hence strict, or exponential, instability should it arise occurs in the $h$-mode, convected with characteristic speed $\alpha^2_*$ strictly greater than the speed $\alpha^1_*=U$ of the $\varphi$ mode. Thus, for a {\it hydrodynamically unstable state to the left} of an asymptotically-constant convective wave solution,  instabilities are convected inward toward the wave, at speed greater than the speed $c=U$ of the wave. Such instabilities are thus subject to convective stabilization, recovering stability with respect to sufficiently localized perturbations. For a {\it hydrodynamically unstable state to the right} of an asymptotically-constant convective wave solution,  on the other hand, instabilities are convected outward from the wave, and cannot be so stabilized (in a classical way).
\er

There is some hope to directly use convective stability at the nonlinear level, since involved topologies behaves nicely with respect to nonlinear estimates. Nonlinear results mentioned above fit in this frame.

In turn, our motivation to also investigate extended convective stabilization and our expectations concerning its role in the nonlinear dynamics is somewhat subtle. Generally speaking, we expect that a wave that could be stabilized in an extended convective sense could enter as a block in a stable multi-wave pattern. For the present case, we are particularly interested in $2$-wave patterns, with the second (rightmost) wave (corresponding to the $h$ mode of \eqref{eq2}) being a {\it Lax shock}. In particular, then, characteristics are propagated into the shock from either
side. This observation will be important in Section \ref{s:num} below, in interpreting the results of our numerical time-evolution experiments.

Let us stress however that all our present discussion about nonlinear dynamics is somewhat speculative. Indeed, due to the presence of an infinitely degenerate neutrally stable mode, even in the smooth stable case, the present problem does not fit under the application of any result that we know of converting spectral stability into nonlinear stability. 

\subsection{Stability of periodic solutions} \label{s:periodic}

Among the wide variety of smooth convective waves, besides the asymptotically-constant ones, those with a periodic profile also fit in a reasonably developed spectral stability framework. The latter is much more recent though. See for instance \cite{G,OZ1,JNRZ13,R_linearKdV} for a few significant contributions to periodic-wave nonlinear and spectral analyses, and \cite{R_HDR,R_Roscoff} for some detailed accounts. We restrain from tackling such a stability analysis, but we would like to point out why the asymptotically-constant analysis is not readily adaptable to the periodic problem. 

A significant part of the asymptotically-constant study is deduced from an energy estimate carried out for the self-adjoint operator in \eqref{weq} obtained through the Liouville transformation $w=e^{\frac{1}{2}\int_0^x f_1(y)\dD y}U$ of \eqref{Ltrans}. Yet, in the periodic case, such a transformation is not available, unless $f_1$ is mean-free over a period, since otherwise $e^{\frac{1}{2}\int_0^x f_1(y)\dD y}$ is not periodic and grows without bound near one infinity, decays to zero near the other infinity. 

We point out that, for discontinuous periodic waves, the stability framework of \cite{JNRYZ,DR2} could also be adapted to the present context. Yet we shall not carry out this analysis either. 

Thus, in the periodic case, we do not obtain analytic results either for smooth or discontinuous profiles.

\section{Stability of discontinuous solutions}\label{s:discont}

We now consider briefly the case of piecewise-smooth convective waves, with a finite number of discontinuities, that are asymptotically constant. We prove that these waves are also spectrally stable when the limiting endstates are so. Our proof proceeds by a limiting argument from the stability of asymptotically-constant smooth waves. By taking a limit we lose the strong stability part of Theorem~\ref{th:main}.

The choice of such an argument also reflects that we have not been able to extend directly the arguments of the smooth analysis to the discontinuous case. To provide some insights on how these fail, let us give some details on what we are able to obtain in this way. The reduction to a nice eigenvalue problem and the proof of the impossibility that transitions to instability occur through the emergence of eigenvalues from $\lambda=0$ may indeed be adapted with little changes. The important part that we are not able to extend is the exclusion of transitions to instability through nonzero purely imaginary eigenvalues. In the smooth case, this eventually relies on an energy\footnote{There is some freedom in the energy estimate used but we have failed to adapt any of those.} estimate and discontinuities introduce, in the estimate, jump contributions that we have not been able to manage. Thus, by using real symmetry, the extension of the smooth-case proof only yields that transition to instability may only occur through the passage of pairs of nonzero complex conjugates eigenvalues. We are also able to extend the argument used to show the impossibility of a passage through $0$ so as to compute explicitly an instability index, counting modulo $2$ the number of eigenvalues on $(0,\infty)$; see \cite{GZ,Pego-Weinstein,BJRZ,BMR,JNRYZ} for similar computations. The upshot of the latter --- consistent with the rest of the present arguments --- is that this number is always even\footnote{We recall that we show by different arguments that this number is always $0$.}.

The rest of the section is devoted to the proof of the claimed spectral stability formalized in the following theorem.

\begin{theorem}\label{th:disc}
Consider an asymptotically-constant piecewise-smooth profile with limiting values at infinities $(h_0, c, 0,\varphi_-)$ and $(h_0, c, 0, \varphi_+)$, $c>0$, reached exponentially fast, and exhibiting a finite number of discontinuities. This wave is spectrally stable if and only if
\begin{align*}
C_f&\geq C_t\,,&
F_+&\leq 2\,,&
F_-&\leq 2\,.
\end{align*}
\end{theorem}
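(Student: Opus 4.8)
The plan is to prove Theorem~\ref{th:disc} by a limiting argument, approximating the piecewise-smooth convective wave by a sequence of smooth convective waves to which Theorem~\ref{th:main} applies, and then passing to the limit at the level of the reduced spectral problem. First I would set up the approximation: given a piecewise-smooth asymptotically-constant profile determined by \eqref{weird} with datum $\delta$ having finitely many jumps and exponentially localized, I would choose a sequence $\delta_n\in\mathcal{C}^\infty$, still exponentially localized uniformly in $n$, with $\delta_n\to\delta$ in $L^1$ (and locally uniformly away from the jump set), keeping the same $(c,h_0,\kappa)$. Each $\delta_n$ yields via \eqref{weird} a smooth convective wave $(\bar h_n,c,0,\bar\varphi_n)$ with the same endstates $(h_0,c,0,\varphi_\pm)$, hence by Theorem~\ref{th:main} (using $C_f\ge C_t$, $F_\pm\le 2$, via the spectral-to-strong limiting argument already invoked there) each is spectrally stable.

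Next I would transfer everything to the reduced operators $L_{red}(\lambda)$ of Section~\ref{s:degen}. The point is that the algebraic reduction (elimination of $\Phi$ and $\varphi$, the flux-type scaling $h\mapsto\lambda h$) goes through verbatim on each smooth part and, as already noted in Section~\ref{s:discont}, the Rankine--Hugoniot conditions \eqref{redRH1}--\eqref{redRH2} mean that $\bar h$ and $\bar h^3\bar\varphi+\tfrac12 g'\bar h^2$ are continuous across discontinuities, so the reduced operator $L_{red}(\lambda)$ makes sense for the piecewise-smooth wave, with appropriate transmission conditions at the jumps encoded by continuity of the ``flux'' variables $(\tau,\bar hU)$. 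The key observation is that the coefficients of $L_{red}^{(n)}(\lambda)$ converge to those of $L_{red}(\lambda)$ in the relevant (e.g. $L^1_{loc}$ plus exponential-tail) sense, uniformly on compact subsets of the closed right half-plane minus the origin and minus the excluded algebraic set \eqref{eq:spec-algebraic}. I would then argue, using the construction of the resolvent of $L_{red}(\lambda)$ from Green functions glued out of exponentially decaying solutions on half-lines (the classical machinery cited from \cite{Z-H,Z3,MZ,Sandstede,KapitulaPromislow-stability}), that $\lambda\notin\spec(L)$ is an open condition that is stable under this kind of coefficient convergence: the stable/unstable subspaces at $\pm\infty$ depend only on the (fixed) endstates, the interior evolution depends continuously on the coefficients in $L^1_{loc}$, and the transmission conditions at the finitely many jumps are handled by finite-dimensional matching. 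Hence if $\lambda$ with $\Re\lambda\ge 0$, $\lambda\ne 0$, were in the spectrum of the limiting (discontinuous) $L$, it would be in the spectrum of $L^{(n)}$ for large $n$, contradicting stability of the approximants; this gives spectral stability of the discontinuous wave. Necessity of $F_\pm\le 2$ and $C_f\ge C_t$ follows exactly as in Theorem~\ref{th:main}: quasi-mode constructions at the endstates and the multiplication-operator spectrum \eqref{eq:spec-algebraic} are insensitive to the interior structure of the wave.

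\textbf{Main obstacle.} I expect the genuinely delicate step to be the continuity-of-resolvent / spectral semicontinuity argument across the limit, specifically making precise in what topology the family $L_{red}^{(n)}(\lambda)$ converges to $L_{red}(\lambda)$ and why this forces convergence of resolvents where they exist. Two subtleties must be watched: (i) the profiles $\bar h_n$ are only converging in $L^1$, not uniformly, near the jumps, so the first-order coefficients of $L_{red}^{(n)}(\lambda)$ (which involve $\bar h_n$, $\bar\varphi_n$ and, crucially, $\bar h_n^3\bar\varphi_{n,x}$) must be arranged so that their antiderivatives converge uniformly — this is exactly why the flux variables, rather than $(h,U)$, are the right unknowns, and one should check the transmission conditions are the $L^1$-limits of the smooth ``no jump'' conditions; (ii) one must ensure the approximation does not create spurious spectrum near the excluded set \eqref{eq:spec-algebraic}, which is fine because that set depends only on $\bar h$ and $\bar\varphi$ through their range and the ranges can be kept in a fixed compact set. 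The loss of the strong-stability conclusion is exactly the price of using closedness of the spectrum (a limit of points with $\Re\lambda<0$ may land on $\Re\lambda=0$), which is why only the non-strict statement is claimed. Everything else — the reduction, the necessity direction, the handling of the finitely many jumps by finite-dimensional matching data — is routine given the smooth case and the cited literature.
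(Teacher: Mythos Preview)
Your overall strategy---mollify $\delta$, keep $(c,h_0,\kappa)$ fixed so the endstates are unchanged, apply Theorem~\ref{th:main} to the smooth approximants, and pass to the limit at the level of the reduced eigenvalue problem---is exactly the paper's approach, and your identification of the main obstacle (continuity of the eigenvalue problem in a topology compatible with $L^1$-convergence of profiles near jumps, requiring flux-type unknowns) is on the mark.

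One correction is needed, however. You assert that \eqref{redRH1}--\eqref{redRH2} imply $\bar h$ is continuous across discontinuities; this is false. The conditions give $s=c$ and $[p]=[\tfrac12 g'\bar h^2+\bar\varphi\bar h^3]=0$, but $\bar h$ itself may jump. Consequently your proposed transmission conditions ``continuity of $(\tau,\bar h U)$'' are not the right ones: the linearized Rankine--Hugoniot analysis (done in the paper) yields instead $[U]=0$ together with a jump condition on the momentum flux, which at the scalar level becomes $[\bar h(g'\bar h+3\bar\varphi\bar h^2)\,U']=0$. The paper then makes your ``flux variables'' idea precise by setting
\[
w_1=U,\qquad w_2=\bar h(g'\bar h+3\bar\varphi\bar h^2)\,U'-C_f|c|c\,U,
\]
so that both transmission conditions become $[w_1]=[w_2]=0$ and the first-order system for $(w_1,w_2)$ has coefficients depending on the profile only through $\bar h$ and $\bar\varphi$ (no derivatives), hence converging under mollification in exactly the sense you anticipated. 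With this fix your plan goes through and matches the paper's proof.
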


Some details on the notion of spectral stability used in the foregoing statement are given in the following subsection.

\subsection{The discontinuous eigenvalue problem}

For the sake of simplicity, we prove Theorem~\ref{th:disc} only for profiles with a single discontinuity, which without loss of generality we fix at $0$. We stress however that the adaptation to the general case is mostly notational.

Following \cite{JNRYZ,DR2}, we linearize in $(h,U,\varphi,\Phi,\psi)$ the system obtained from inserting the solution \emph{ansatz} 
\[
(t,x)\longmapsto
(\bar h,\bar U,\bar \varphi,\bar \Phi)(x-ct-\psi(t))
+(h,U,\varphi,\Phi)(t,x-ct-\psi(t))
\]
in \eqref{rg4}, with $(\bar h,\bar U,\bar \varphi,\bar \Phi)$ and $(h,U,\varphi,\Phi)$ smooth on $\R^*$. This yields interior equations, on $\R^*$, 
\[
A^0\d_tW+\d_x(AW)=EW,
\]
for $W=(h,U, \Phi, \varphi)-\psi\,(\bar h_x,\bar U_x,\bar \varphi_x,\bar \Phi_x)$, where $A^0$, $A$ and $E$ are as in \eqref{mateval}, supplemented with linearized Rankine-Hugoniot conditions. Concerning the latter we first observe that the linearized jump conditions associated with the first and fourth equations of \eqref{rg4} reduce to
\begin{align*}
[U]&=0\,,&\psi'=U(\cdot,0)\,,
\end{align*}
provided that $\bar\varphi$ does jump, which we will assume from now on. Once those are enforced the two remaining jump conditions reduce to a single equation
\[
[(g'\bar h+3\bar\varphi\bar h^2)\,h+\bar h^3\,(\Phi+\varphi)]\,=\,0\,.
\]
Incidentally, we point out that the foregoing reductions of Rankine-Hugoniot conditions also occur at the nonlinear level.

We have left aside the case when $\bar \varphi$ is actually continuous. This may indeed indeed happen but the required changes in the argument are relatively straightforward. Indeed, the comparison to smooth problems turns out to be even slightly simpler in this case since the discontinuity of $\bar \varphi$ is the main obstacle to directly extend to weak solutions on $\R$ the algebraic manipulations carried out in the smooth case.

Now, performing algebraic manipulations as in Subsection~\ref{s:degen} on the corresponding spectral problems, including the introduction of
\[
\tau:=\lambda\,(h-\psi\,\bar h_x)
\] 
(as a function on $\R^*$) and the elimination of $\psi$, reduces the question of the spectral stability of the wave under consideration to $C_f\geq C_t$ and for any $\lambda$ with $\Re(\lambda)>0$, the problem
\begin{align*}
L_{red}(\lambda)\bp \tau\\U\ep&=G&\textrm{on }\R^*\,,\\
\bp
[U]\\
[(g'\bar h+3\bar\varphi\bar h^2)\,\tau+(\hat g\bar h-\bar h^3\bar\varphi_x)\,U]
\ep&=G_0\,,
\end{align*}
is boundedly invertible.

Classical arguments, that may be thought as variations on the one used in Section~\ref{s:stab-smooth}, show that spectral stability (respectively strong spectral stability) is then equivalent to $C_f\geq C_t$, $F_\pm\leq 2$ 
and the non existence, for $\lambda$ such that $\Re(\lambda)>0$ 
of a nonzero $U$ (in the relevant functional space) solving 
\begin{align*}
U''+f_1 U'+(f_2\lambda^2+f_3\lambda+f_4)U&=0&\textrm{on }\R^*\,,\\
[-\bar h(g'\bar h+3\bar\varphi\bar h^2)\,U'
]&=0\,,&
[U]&=0\,.
\end{align*}
In the foregoing, $f_1$, $f_2$, $f_3$ and $f_4$ are as in \eqref{Ueq}, in particular $f_4\equiv 0$.

\subsection{Limit of continuous waves}

Let $\delta:=\bar h-h_0$. Consider a mollified sequence $\delta^\eps$, arising from convolution with a rescaling of a smooth compactly supported kernel, and the corresponding profiles $(\bar h_\eps,c,\bar\varphi_\eps,0)$ with the same limiting height $h_0$. In particular, $\bar h_\eps$ is bounded uniformly in $\eps$ and $\bar h_\eps-h_0$ converges to $\bar h-h_0$ in $L^p(\R)$ for any $1\leq p<\infty$, and in $L^\infty(\R\setminus[-x_0,x_0])$ for any $x_0>0$. It follows from \eqref{weird} that similar conclusions also hold for $\bar\varphi_\eps$. Moreover the foregoing convergences also hold in exponentially weighted topologies (with weights adapted to the convergence rate of $\delta$). 

At this stage, we would like to reformulate eigenvalue problems for both smooth $(\bar h_\eps,c,\bar\varphi_\eps,0)$ and discontinuous $(\bar h,c,\bar\varphi,0)$ so as to reach a form for which continuity with respect to $\eps$ stem from the above convergences and standard arguments. Once this is done, it follows that if for some $\lambda_0$ with $\Re(\lambda_0)>0$ the discontinuous $\lambda_0$-eigenvector system possesses a non trivial solution, this is also true when $\eps>0$ is sufficiently small for the smooth $\lambda_\eps$-eigenvector system associated with some $\lambda_\eps$ converging to $\lambda_0$ when $\eps$ goes to zero. Hence the conclusion.

Such a convenient formulation is obtained by setting (for instance)
\begin{align*}
w_1&=U\,,&
w_2&=\bar h_\eps(g'\bar h_\eps+3\bar\varphi_\eps\bar h_\eps^2)\,U'
-C_f\,|c|\,c\,U\,.
\end{align*}
Indeed, the $\lambda$-eigenvalue problem then takes the form
\begin{align*}
w_1'&=\frac{C_f\,|c|\,c}{\bar h_\eps(g'\bar h_\eps+3\bar\varphi_\eps\bar h_\eps^2)}\,w_1
+\frac{1}{\bar h_\eps(g'\bar h_\eps+3\bar\varphi_\eps\bar h_\eps^2)}\,w_2&\textrm{on }\R^*\,,\\
w_2'&=(\lambda^2\bar h_\eps+\lambda\,2\,C_f\,c)\,w_1&\textrm{on }\R^*\,,\\
[w_1]&=0\,,&
[w_2]=0\,.
\end{align*}

\section{Numerical time-evolution experiments}\label{s:num}
We conclude our study with a series of numerical time-evolution experiments carried
out with the use of the numerical package CLAWPACK \cite{C1,C2}.


Many of the numerical experiments are chosen according to the principle that convective waves are mostly forced by $\varphi$-variations, with $h$-variations being slaved to those through \eqref{weird}, while $(U,\Phi)$ are held constant, $\Phi$ being identically zero. The main reason supporting the discrepancy in our perception of respective roles of $h$ and $\varphi$ is that among the traveling waves we have identified only convective waves may carry out variations in $\varphi$. 

Let us also point out that for other kinds of waves we expect, from both modeling considerations and analysis in \cite{RYZ,RYZ2}, $\Phi$ to remain very small everywhere except near discontinuities. Consistently, in many of our simulations $\Phi$ remains very small throughout the whole time evolution. When this is the case, we omit the panels for variable $\Phi$.

\subsection{Experiments}

\subsubsection*{Figures~\ref{figure2} and~\ref{figure3}}

We begin with a time evolution starting from a Riemann type data, joining two stable equilibria, with initial variations purely in the $\varphi$-variable.

Explicitly, in Figure~\ref{figure2}, we show the result of the simulation of \eqref{rg4}-\eqref{C} for $g'=10\cos(\frac{\pi}{10})$, $\hat{g}=10\sin(\frac{\pi}{10})$, $C_t=0.9$, and $C_f=1$ and with initial data $h\equiv h_0=1$, $U\equiv c=\sqrt{h_0\hat{g}/C_f}$, $\Phi\equiv 0$ and $\varphi=\varphi_L\mathbbm{1}_{x\le 50}+\varphi_R\mathbbm{1}_{50<x}$ where $\varphi_L=0.2$ and $\varphi_R=0.5$. From left to right, we show the solutions at $t=0$, $1$, $10$, and $95$, respectively, with panels for variable $\Phi$ omitted. 

\begin{figure}[htbp]
\begin{center}
\includegraphics[scale=.42]{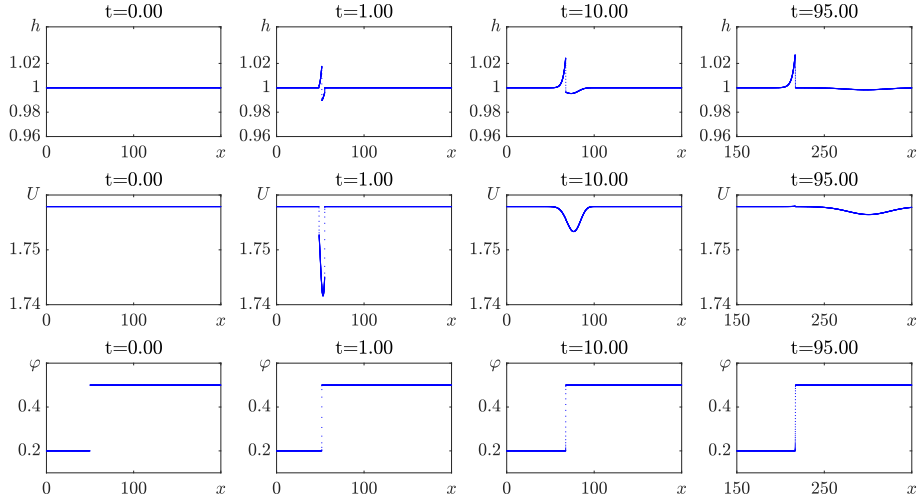}
\end{center}
\caption{Contact discontinuity (stable constant state). The simulation shows emergence of a traveling convective wave.}
\label{figure2}
\end{figure}

Up to numerical smoothing effects, Figure~\ref{figure2} exhibits the emergence of a convective wave profile traveling at speed $c$, with $\varphi$ keeping its initial step-like shape, and $h$ constant equal to $h_0$ at the right of the discontinuity. The variable $U$ undergoes significant variations before spreading out back to constant $c$. 

To confirm the observation, in Figure~\ref{figure3}, we compare the solution at time $t=95$ with the only convective wave profile such that $\bar\varphi=\varphi_L\mathbbm{1}_{x\le 217}+\varphi_R\mathbbm{1}_{217<x}$ and $\bar h(x)=h_0$ when $x>217$. We compute a numerical approximation of the latter by first solving the reduced Rankine-Hugoniot condition \eqref{redRH2} so as to determine $h_L$, the left-value at the jump of the height component, and then solving backwards from the jump the ODE \eqref{weird}. More explicitly, $h_L$ is obtained by solving the cubic equation $\varphi_L h_L^3+g'h_L^2/2=\varphi_R h_R^3+g'h_R^2/2$, with $h_R=h_0$, giving $h_L=1.0292\ldots $, whereas the non-constant part of $h$ is then deduced by solving
\be 
\label{fode_varphi_constant}
h'=\frac{\hat{g}(h-h_0)}{g'h + 3\varphi_L h^2}\quad\textrm{on }(-\infty,217],\qquad\qquad
h(217)=h_L\,.
\ee 
Up to numerical smoothing near the discontinuity, the matching is very convincing. 

\begin{figure}[htbp]
\begin{center}
\includegraphics[scale=.42]{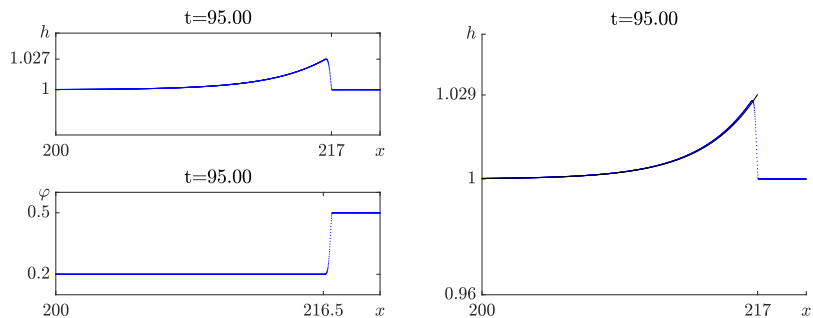}
\end{center}
\caption{Left: Blow up of \ref{figure2} at $t=95$. Right: comparison of analytically computed convective profile (black curve) and simulated convective profile (dotted curve).}
\label{figure3}
\end{figure}

\subsubsection*{Figure~\ref{figure4}}

In Figure~\ref{figure4}, we also provide a time evolution starting from a Riemann type data, joining two equilibria, with initial variations purely in the $\varphi$-variable, but this time the endstates are unstable.

Explicitly, in Figure~\ref{figure4}, we simulate with $g'=10\cos(\frac{\pi}{6})$, $\hat{g}=10\sin(\frac{\pi}{6})$, $C_t=0.04$, and $C_f=0.05$ and with initial data $h\equiv h_0=1$, $U\equiv c=\sqrt{h_0\hat{g}/C_f}$, $\Phi\equiv 0$ and $\varphi=\varphi_L\mathbbm{1}_{x\le 50}+\varphi_R\mathbbm{1}_{50<x}$ where $\varphi_L=0.3$ and $\varphi_R=0.1$. 

In simulations associated with Figures~\ref{figure2} and~\ref{figure3}, the limiting constant states have been chosen to satisfy the hydrodynamical stability condition \eqref{hydro_RG}. When the condition is not satisfied, shocks can form in front of the convective wave. That is, consistent with the discussion of Section \ref{s:full}, in the absence of hydrodynamic stability, the simple time-asymptotic structure predicted by the formal equilibrium system breaks down, and more complicated patterns are expected to emerge. This is what we observe on Figure~\ref{figure4}. 

Note that, as expected, variations of $\Phi$ are located near discontinuities that are not conveyed by convective waves. 

\begin{figure}[htbp]
\begin{center}
\includegraphics[scale=.42]{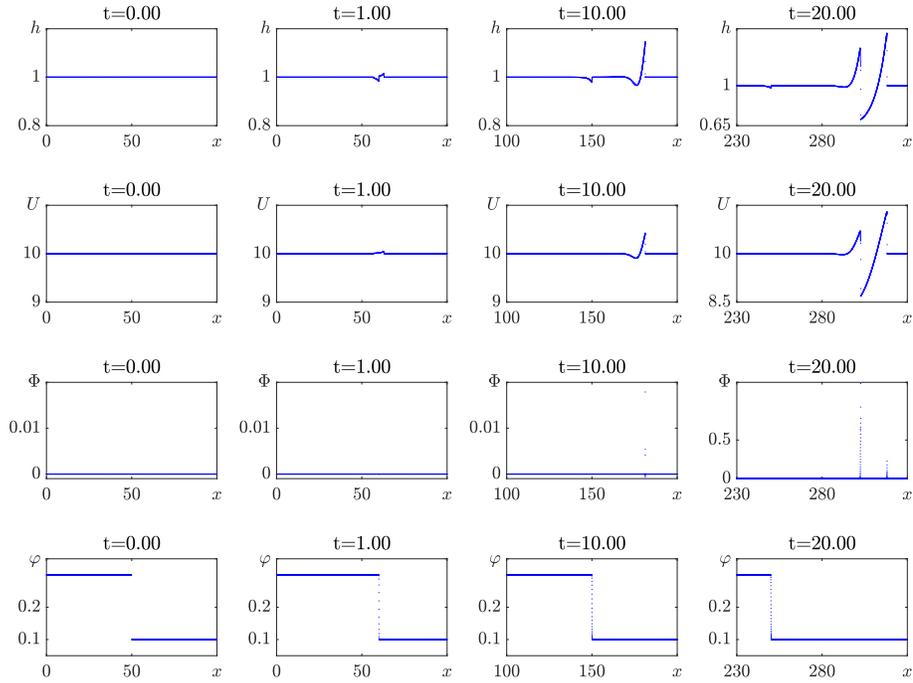}
\end{center}
\caption{Contact discontinuity (unstable constant states). The simulation shows emergence of a traveling convective wave together with shocks in front.}
\label{figure4}
\end{figure}

\subsubsection*{Figures~\ref{figure5} and~\ref{figure6}}

In Figures~\ref{figure5} and~\ref{figure6}, we directly test nonlinear stability of smooth convective waves. Our goal is two-fold. On one hand we want to provide an example on which a relatively small perturbation is resolved in large time into another convective wave with very significantly different shape compared to the initial unperturbed profile. On the other hand, we want to exemplify that perturbations in $\varphi$ have a much stronger impact than those in $h$.

In both figures, parameters are chosen as $g'=10\cos(\frac{\pi}{10})$, $\hat{g}=10\sin(\frac{\pi}{10})$, $C_t=0.8$, and $C_f=1$, and the reference convective wave is obtained by solving \eqref{weird} in $\bar \varphi$ with $h_0=1$, $\bar U\equiv c=\sqrt{h_0\hat{g}/C_f}$, $\bar\Phi\equiv 0$, $\bar h=h_0+\delta$ where

\[
\delta=0.02\times\mathbbm{1}_{|x-3|<1}e^{-\frac{1}{1-(x-3)^2}}\,,
\]
and $\bar{\varphi}(-\infty)=4$, $\kappa=g'h_0^2/2+4h_0^3$.

In figure \ref{figure5}, we display the result of a simulation of \eqref{rg4}-\eqref{C} with the same parameters but initial data $(\bar h+h_{perturbation},\bar U,\bar \Phi,\bar\varphi)$ where
\[
h_{perturbation}=-0.01\times\mathbbm{1}_{|x-6|<1}e^{-\frac{1}{1-(x-6)^2}}\,.
\]
From left to right, we show the solution at $t=0$, $0.2$, $1$, and $6$, respectively,  panels for variable $\Phi$ being omitted. The simulation shows convergence to a traveling convective wave profile having mild difference in all variables when compared with a suitable translate of the unperturbed initial profile $(\bar h,\bar U,\bar \Phi,\bar\varphi)$.
  
\begin{figure}[htbp]
\begin{center}
\includegraphics[scale=.42]{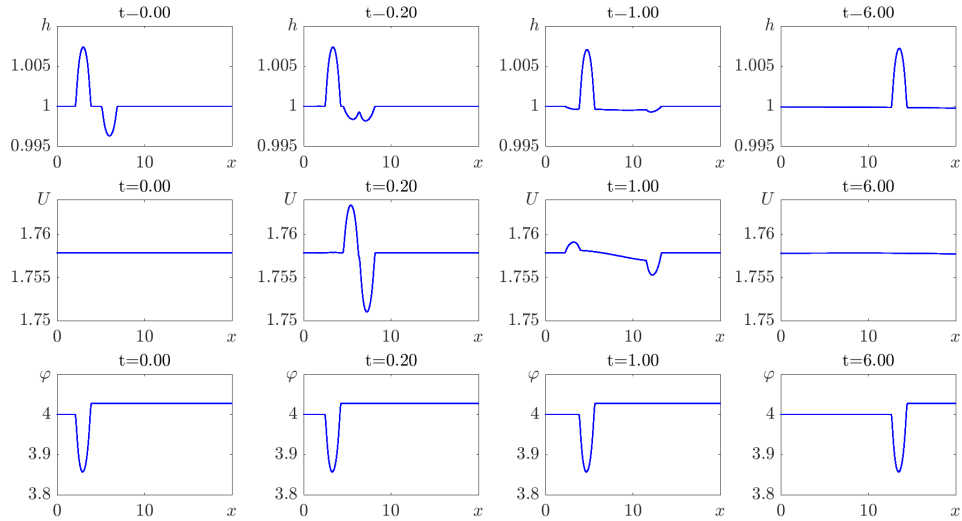}
\end{center}
\caption{Convective wave with perturbation on $h$. The simulation shows convergence to the unperturbed traveling convective wave.}
\label{figure5}
\end{figure}

In figure \ref{figure6}, the computation is completely similar but with initial data 
$(\bar h,\bar U,\bar \Phi,\bar\varphi+\varphi_{perturbation})$ where
\[
\varphi_{perturbation}=0.1\times\mathbbm{1}_{|x-5|<1}e^{-\frac{1}{1-(x-5)^2}}\,.
\]
The numerical outcome shows convergence to another convective wave, whose profile seems to have $\varphi$-component equal to $\bar\varphi+\varphi_{perturbation}$.

\begin{figure}[htbp]
\begin{center}
\includegraphics[scale=.42]{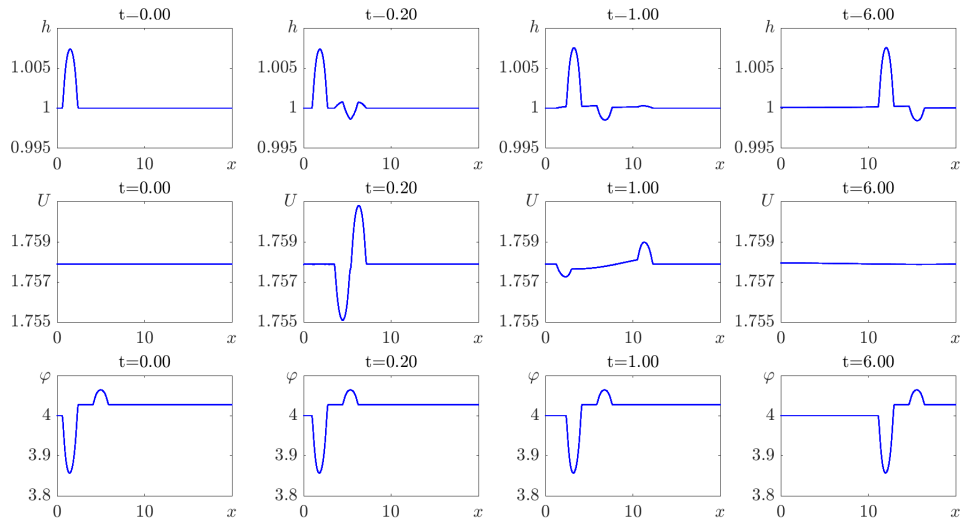}
\end{center}
	\caption{Convective wave with perturbation on $\varphi$. The simulation shows convergence to another convective wave profile whose $\varphi$ component shows mild differences with the \emph{perturbed} initial $\varphi$.}
\label{figure6}
\end{figure}

\subsubsection*{Figures~\ref{figure7} and~\ref{figure8}}

Next, we provide experiments of more direct practical interest, where we investigate interactions of hydraulic jumps arising from a dam break with downward bottom enstrophy. The dam-break experiment is a common test-case for (SV) codes. The $(h,U)$ part of the initial data is chosen to mimic the instantaneous removal of a dam wall separating two fluids at equilibrium with higher height upward. In our numerical tests we add some bottom vorticity ahead of the dam-break location.

We run these numerical experiments with unstable equilibria, but, moreover, we tune parameters to observe first a convectively stable hydraulic jump and then one that is unstable even in the convective sense. When doing so, we use analytic insights from \cite{RYZ}, where we completely determine conditions for convective stability.

Numerical simulations in Figure~\ref{figure7} have been carried out with $g'=5\sqrt{3}$, $\hat{g}=5$, $C_t=0.04$, and $C_f=0.05$ and initial data given by $\Phi\equiv 0$,
\begin{align*}
h&=h_L\mathbbm{1}_{x\le 5}+h_R\mathbbm{1}_{x>5}\,,\qquad
U\,=U_L\mathbbm{1}_{x\le 5}+U_R\mathbbm{1}_{x>5}\,,\\
\varphi&=0.3\mathbbm{1}_{x\le 10}+0.1\mathbbm{1}_{10<x\le 20}+0.5\mathbbm{1}_{20<x\le 30}+0.2\mathbbm{1}_{30<x\le 40}+0.6\mathbbm{1}_{40<x}\,,
\end{align*}
where $h_L=1$, $h_R=0.2$, $U_L=\sqrt{\hat{g}h_L/C_f}=10$ and $U_R=\sqrt{\hat{g}h_R/C_f}=2\sqrt{5}$. From left to right, we show the solution at $t=0$, $5$, $10$, and $15$, respectively.

Figure~\ref{figure7} exhibits at the end the superposition of a convective wave which carries variations in $\varphi$ and travels at speed $U_L=10$ and a (newly discovered) non-monotone hydraulic shock travels at speed $(25-\sqrt{5})/2$ (hence moving faster than the convective wave). Both the non-monotone hydraulic shock and the convective wave look very stable. The convective wave profile has the same structure (up to numerical smoothing) than the initial $\varphi$ component but a different shape, variations occurring over a shorter spatial interval. Incidentally we point out that this is the experiment reported in Figure~\ref{figure7} that has motivated our investigation of convective stability in the present paper and in its companion \cite{RYZ}, and the corresponding revisitation in \cite{FRYZ} of the Saint-Venant analysis of \cite{YZ,SYZ}.

\begin{figure}[htbp]
\begin{center}
\includegraphics[scale=.42]{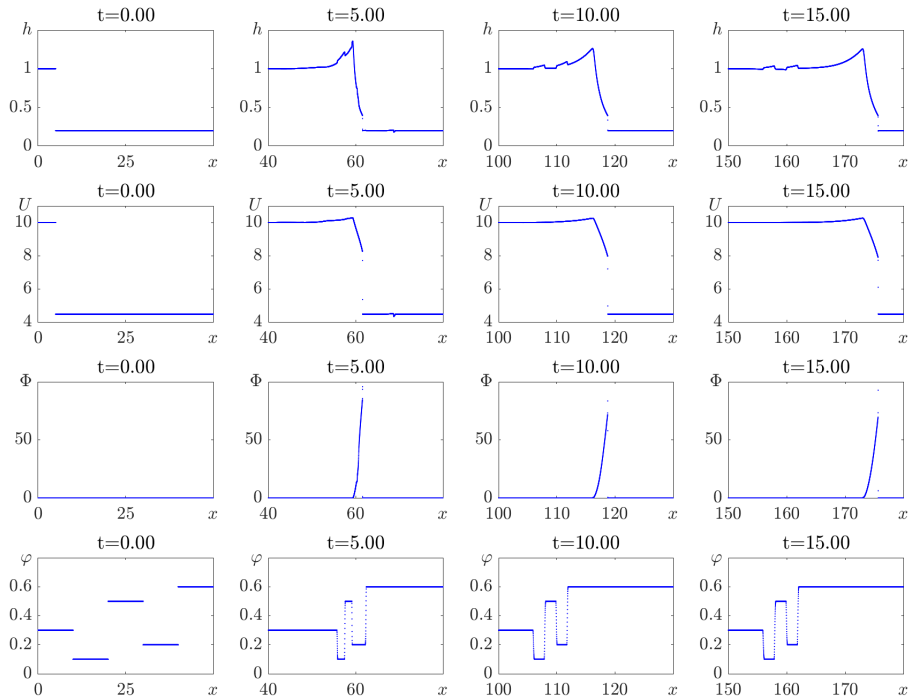}
\end{center}
\caption{Dam-break data in $(h,U)$ with multiple jumps in $\varphi$. The simulation shows asymptotically a non-monotone hydraulic shock to the right with a convection wave trailing behind.}
\label{figure7}
\end{figure}

Figure~\ref{figure8} report simulation of \eqref{rg4}-\eqref{C} with the same choice of parameters and initial data except that $h_R=0.5$ and, accordingly, $U_R=\sqrt{\hat{g}h_R/C_f}=5\sqrt{2}$. The final outcome is however significantly different. As is apparent at $t=10$, on intermediate times we do observe similar phenomena, but afterwards the hydraulic shock starts to be subject to instability and gradually develops a second shock as visible at $t=25$. 

\begin{figure}[htbp]
\begin{center}
\includegraphics[scale=.42]{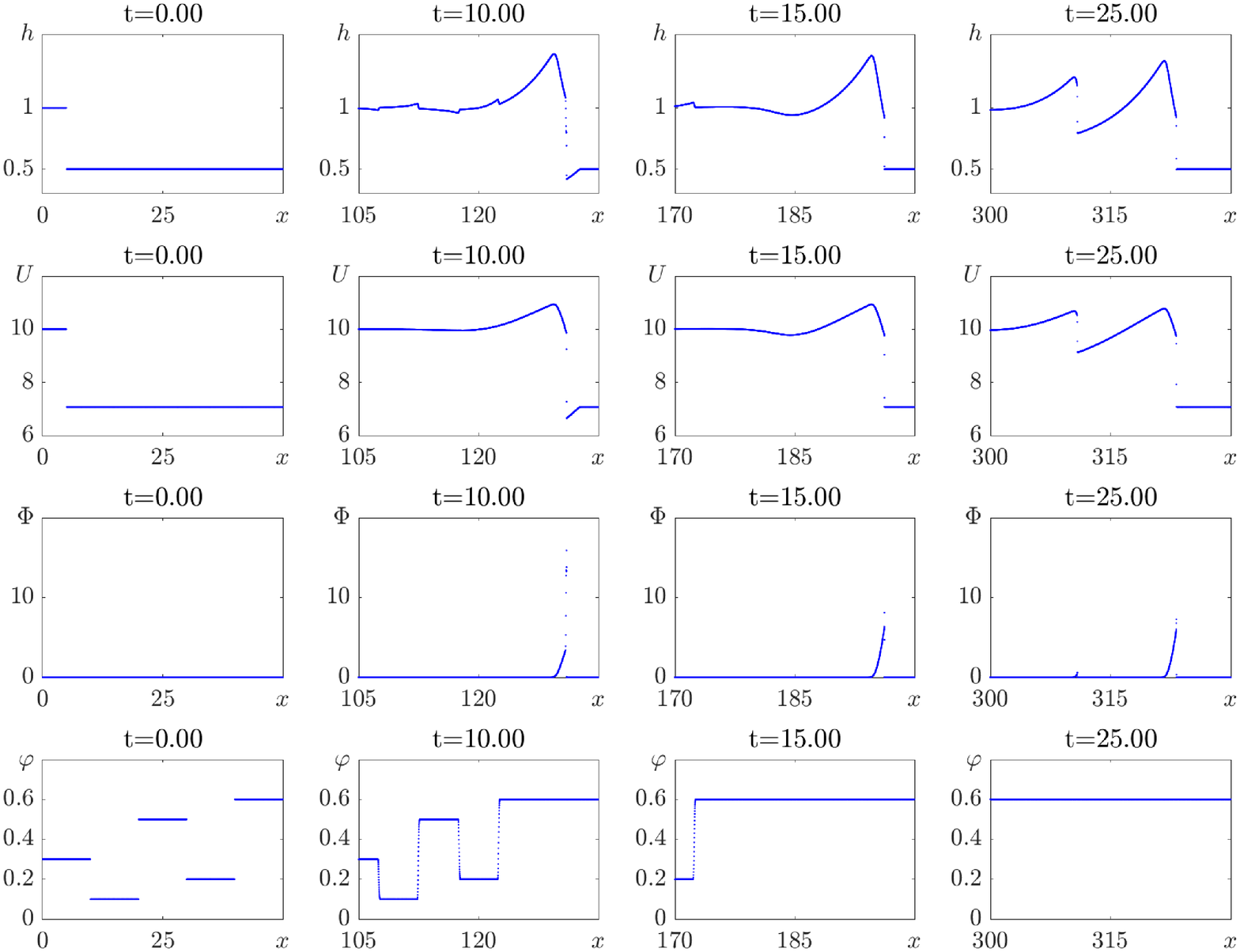}
\end{center}
\caption{Dam-break data in $h/U$ with multiple jumps in $\varphi$. The simulation
shows first show emergence of a non-monotone hydraulic shock to the right with a
convection wave trailing behind. The hydraulic shock is convectively unstable and a second shock emerges  after $t=15.00$. See the plot at $t=25$.}
\label{figure8}
\end{figure}

\subsubsection*{Figures~\ref{figure9} and~\ref{figure10}}

We conclude our numerical investigations with spatially periodic data, a case for which we have very few a priori analytic knowledge. Though we are mostly interested in localized  (hence non-periodic) perturbations of periodic backgrounds, for numerical reasons we provide numerical simulations with periodic boundary conditions. Our goal is to observe whether a pure $\varphi$ periodic perturbation on top of a constant equilibria is resolved into a stable periodic convective wave. 

This is exactly what is happening in the experiment reported in Figure~\ref{figure9}. The latter has been carried out for $g'=10\cos(\frac{\pi}{10})$, $\hat{g}=10\sin(\frac{\pi}{10})$, $C_t=0.9$, and $C_f=1$, with initial data $h\equiv h_0=1$, $\varphi=2+\sin(\pi x)$, $U\equiv c=\sqrt{h_0\hat{g}/C_f}$, $\Phi\equiv 0$, and periodic boundary conditions over $[0,10]$. From left to right, we show the solutions at $t=0$, $0.5$, $2$, and $5$, respectively, $\Phi$ panels being omitted.

\begin{figure}[htbp]
\begin{center}
\includegraphics[scale=.42]{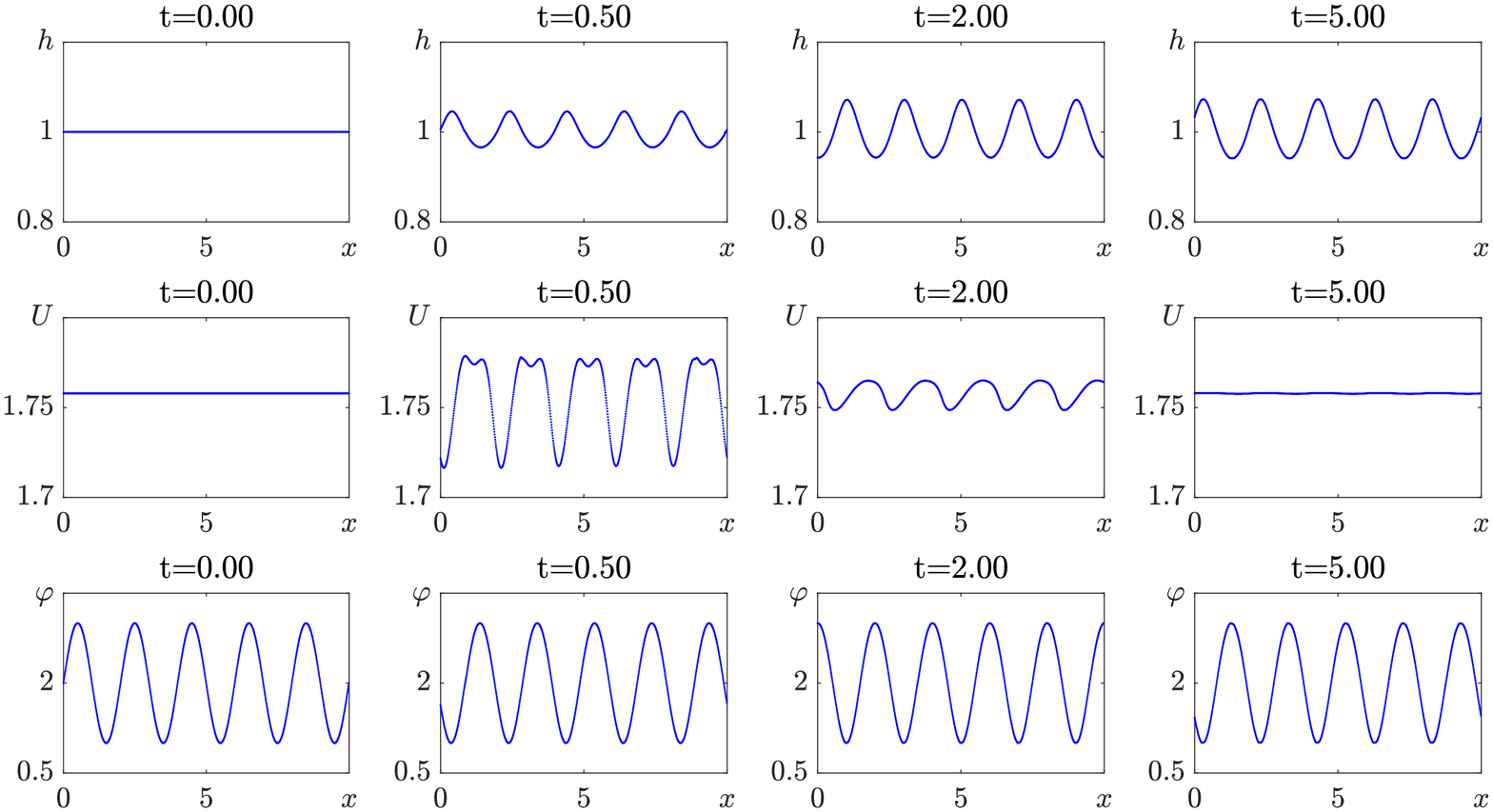}
\end{center}
\caption{Periodic initial data in $\varphi$. The simulation shows convergence to a periodic traveling convective wave.}
\label{figure9}
\end{figure}

Figure~\ref{figure10} shows the result of the simulation of \eqref{rg4}-\eqref{C} for $g'=10\cos(\frac{\pi}{6})$, $\hat{g}=10\sin(\frac{\pi}{6})$, $C_t=0.04$, and $C_f=0.05$, with initial data $h\equiv h_0=1$, $\varphi=2+\sin(\pi x)$, $U\equiv c=\sqrt{h_0\hat{g}/C_f}$, $\Phi\equiv 0$, and periodic boundary conditions on $[0,10]$.  From left to right, we show the solutions at $t=0$, $5$, $10$, and $40$, respectively.

The corresponding final outcome seems well described as a time-quasiperiodic solution resulting from the superposition of two waves traveling at different speeds\footnote{The question of whether the quasiperiodic object is actually a periodic one depends on whether or not the two speeds are rationally related, a question undecidable from numerical observations.}, namely one smooth periodic convective wave and one roll wave. In the end, variations of $U$ and $\Phi$ are conveyed by the roll-wave part whereas $\varphi$ variations are supported by the convective part. On the $h$ part one does see the quasiperiodic superposition, with the position of the roll-wave part being noticeable on discontinuities. 

\begin{figure}[htbp]
\begin{center}
\includegraphics[scale=.42]{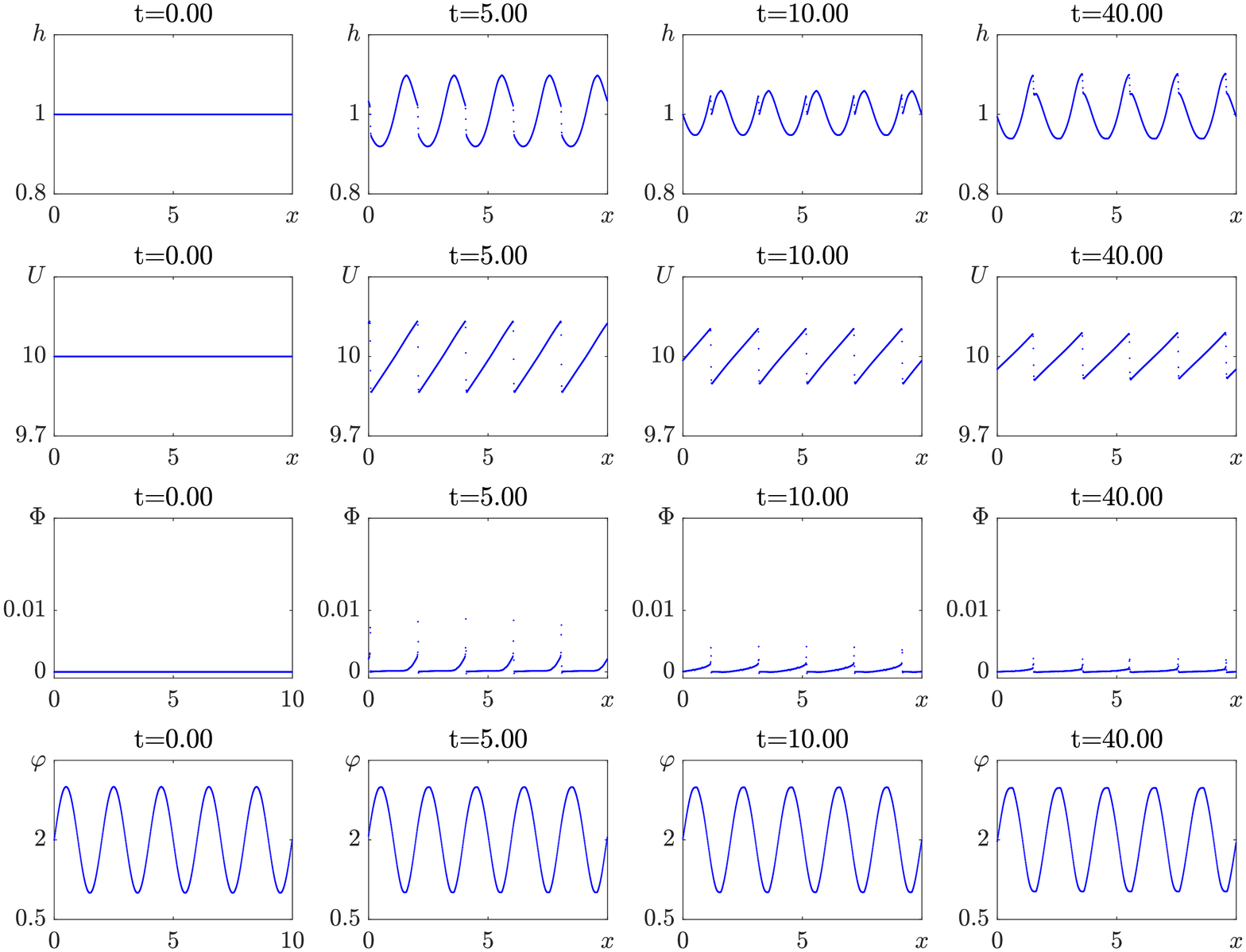}
\end{center}
\caption{Periodic initial data in $\varphi$. The simulation shows superposition of a traveling convective wave and a discontinuous roll wave.}
\label{figure10}
\end{figure}

\subsection{Numerical conclusions}\label{s:numc}
As noted in Section \ref{s:eq4}, it is readily seen that equilibrium system \eqref{eq2} admits a unique solution of the Riemann problem between two equilibrium states, consisting of a shock followed by a contact discontinuity. The result of our experiments suggest that in the hydrodynamically stable case that \eqref{hydro_RG} is satisfied both for the initial equilibrium states and the states predicted by the equilibrium Riemann solution, Riemann data for the full system  \eqref{rg4} yields time-asymptotic behavior conforming to this prediction, but with relaxation profiles substituted for (equilibrium) shock and contact discontinuities. However, in the hydrodynamically unstable case that \eqref{hydro_RG} fails, either at endstates or intermediate states in the Riemann solution for the equilibrium system, more complicated but still coherent time-asymptotic patterns can emerge, involving nonmonotone shock profiles and various kinds of composite waves involving combinations of shock and roll waves.

Though we did not investigate it systematically, our brief study of the periodic case suggests that still more complicated possibilities may occur, with approximate superposition of periodic waves moving with different speeds. We conjecture that this may indeed result in both time-periodic and quasiperiodic solutions that are not traveling waves. The rigorous study of their existence would be an extremely interesting direction for further study.


\section{Discussion and open problems}\label{s:disc}
We have shown the appearance in the Richard-Gavrilyuk model (RG) of a new type of convective waves not present in the standard Saint-Venant model (SV), and characterized them as linearly degenerate waves analogous to those occurring in the entropy field for gas dynamics. Indeed, they correspond to one of the two enstrophy fields introduced in (RG), namely bottom vorticity $\varphi$.

Together with hydraulic shocks, these waves play an important role in time-asymptotic behavior
from asymptotically constant initial data, 
corresponding to relaxation profiles of contact discontinuities in the associated equlibrium system \eqref{eq2} in the same way that hydraulic shocks both in (SV) and in (RG) correspond to relaxation profiles of Lax-type equilibrium shocks \cite{YZ,RYZ}.
However, different from contact relaxation profiles studied in \cite{HPW,Zh}
under Kawashima-type dissipativity conditions, these do not spread diffusively into a universal Gaussian-like profile, but appear to persist unchanged for all time; accordingly, there appear to be an infinitude of different ultimate time-asymptotic profiles.

Our numerical time evolution experiments indicate that both components ---
hydraulic shocks and 
asymptotically constant 
convective waves --- are 
stable when their endstates are, 
with arbitrary data resolving eventually into a noninteracting convective wave/hydraulic shock pattern. Under the hydrodynamic stability conditions \eqref{hydrodynamicstability} on endstates, we have shown analytically, by Sturm-Liouville considerations, that 
asymptotically-constant convective waves are spectrally stable. The related problem of existence and spectral stability of hydraulic shocks is studied in \cite{RYZ}.

Interestingly, stability appear also to hold sometimes even for contact-shock components arising from asymptotically-constant data with endstates violating the hydrodynamic stability conditions \eqref{hydrodynamicstability}. 
This 
can be understood as an example of {\it convective stability}, whereupon instabilities arising at the end state are convected into the wave and unltimately stabilized. Indeed, one may verify that instabilities arising through violation of  \eqref{hydrodynamicstability} 
at the left endstate 
correspond to the genuinely nonlinear $h$-field, which travels transverse to contact waves, and ultimately into the leading Lax shock, thus stabilizing. This phenomenon may be captured by working in a weighted norm, amounting to precisely the Sturm-Liouville transformation studied earlier. Thus, convective spectral stability of convective (in different sense) wave solutions follows for 
%
asymptotically-constant 
profiles by the same Sturm-Liouville argument as in the hydrodynamically stable case.

For both hydraulic shocks and convective-wave solutions, nonlinear stability remains a very interesting open question. For, independent of Froude number $\tilde F$ and strict hydrodynamic stability  in the $h$ equilibrium mode, the fact that the convective $\varphi$ mode is a characteristic mode of both relaxation system (RG) and equilibrium system \eqref{eq2} means that Kawashima's genuine coupling condition is violated for this model,
leading, independent of $\tilde F$, to {\it neutral instability} in the $\varphi$ equilibrium mode, corresponding with the infinite-dimensional kernel of the linearized operator about the wave, as seen in Section \ref{s:stab-smooth}. Thus, nonlinear 
asymptotic 
stability, if valid, is in the weak sense of 
stability of the infinite-dimensional family of convective waves in its entirety and not of individual waves, or of some of their finite-dimensional orbits under the action of symmetry group. 
It may also be that waves are merely \emph{metastable}, with slowly growing instabilities occurring through resonance with this nondecaying infinite-dimensional kernel.

The resolution of this issue of nonlinear stability is an extremely interesting direction for future study. 
Every example we know of among the relatively few analytical results analyzing the large-time dynamics near an infinite-dimensional family of (relative) equilibria --- including amazingly impressive analyses near homogeneous distributions of Vlasov-Poisson systems \cite{Mouhot-Villani,Bedrossian-Masmoudi-Mouhot} and shear flows of incompressible Euler systems \cite{Bedrossian-Masmoudi} --- requires an extremely fine understanding of the nonlinear structure of the system at hand. 
Indeed, it appears to be a challenging problem even for the simplest case of stability of
{\it constant, equilibrium solutions}, similar to the study \cite{LZ} of compressible Navier-Stokes equations with zero heat conductivity, 
or, even closer, to the analysis of a class of degenerate hyperbolic systems with relaxation in \cite{Mascia-Natalini}. For comparison with the latter, we point out that considering convective stabilization by spatial weights of discontinuous waves could also help preventing the formation of new singularities (hence relaxing the linear degeneracy assumption of \cite{Mascia-Natalini} for the marginally stable mode), as exemplified in \cite{Blochas-Wheeler}.
%

For 
periodic convective waves, even spectral stability appears to be more complicated.
Another very interesting direction for further investigation is the numerical study of 
the latter. 
In particular, considering the example of (acoustic, or genuinely nonlinear) periodic roll waves for (SV), which arise if and only if the Froude number $F$ is $>2$, violating hydrodynamic stability, one may ask what is the role of the Froude number 
in stability of periodic convective waves. Though it does not affect existence of periodic convective waves as in the case of periodic acoustic waves, it might yet potentially affect stability. Whether or not this is the case is a natural question that would be interesting to resolve. The related problem of existence and spectral stability of roll waves is studied in \cite{RYZ2}.

Even more generally, our numerical experiments of spatially periodic solutions suggest that the (RG) model also supports time-quasiperiodic solutions, seemingly obtained by superposing convective waves and roll waves, whose study is widely open from any point of view. 

\appendix

\section{Hydrodynamic stability and extended consistent splitting}\label{app:split}

We detail here a few computations related to the near-constant stability analysis.

\subsection{Hydrodynamic stability}

We first recover \eqref{hydroRGF}--\eqref{hydroRGC} as criteria for the stability of a constant equilibrium. The reduction process of Subsection~\ref{s:degen} applies equally well to the constant-coefficient analysis so that we may focus on the invertibility of $L_{red}^{\varphi_0}(\lambda)$ defined by
\begin{align*}
L_{red}^{\varphi_0}(\lambda)\bp \tau\\U\ep
&:=\bp1&h_0\d_x\\
\left(g'+3h_0\varphi_0\right)\d_x
-h_0^{-1}\,\hat{g}
&\lambda^2+\lambda\,2C_fc\,h_0^{-1}\ep\bp \tau\\U\ep\,.
\end{align*}

By either Fourier or Green-function arguments, one readily checks that $L_{red}^{\varphi_0}(\lambda)$ is invertible (on a reasonable weightless Sobolev space) if and only if there is no non-zero trigonometric monomial $(\tau, U)$ such that $L_{red}^{\varphi_0}(\lambda)(\tau, U)^T=0$. This is equivalent to the fact that the matrix 
\begin{align*}
M^{\varphi_0}(\lambda)
&:=
\bp 0&h_0\\
\left(g'+3h_0\varphi_0\right)
&0\ep^{-1}
\bp -1&0\\
h_0^{-1}\,\hat{g}
&-\lambda^2-\lambda\,2C_fc\,h_0^{-1}\ep\\
&\,=\,
\frac{1}{h_0\,\left(g'+3h_0\varphi_0\right)}
\bp
\hat{g}
&-h_0\lambda^2-\lambda\,2C_fc\,\\
-\left(g'+3h_0\varphi_0\right)&0\ep
\end{align*}
has no purely-imaginary eigenvalue. These eigenvalues are readily computed to be 
\begin{align*}
\gamma_{1}^{\varphi_0}(\lambda)
&=
\frac{\hat{g}+\sqrt{\hat{g}^2+4\,\left(g'+3\,h_0\,\varphi_0\right)\,\left(2\,C_f\,c\,\lambda+h_0\,\lambda ^2\right)}}{2\,h_0\,\left(g'+3\,h_0\,\varphi_0 \right)}\,,
\\
\gamma_{2}^{\varphi_0}(\lambda)&=
\frac{\hat{g}-\sqrt{\hat{g}^2+4\,\left(g'+3\,h_0\,\varphi_0\right)\,\left(2\,C_f\,c\,\lambda+h_0\,\lambda ^2\right)}}{2\,h_0\,\left(g'+3\,h_0\,\varphi_0\right)}\,.
\end{align*}
Note that we have implicitly excluded cases when $\hat{g}^2+4\,\left(g'+3\,h_0\,\varphi_0\right)\,\left(2\,C_f\,c\,\lambda+h_0\,\lambda ^2\right)\in (-\infty,0)$ so as to pick a determination of $\sqrt{\cdot}$, but this is irrelevant for the stability analysis, since a direct computation shows that $2\,C_f\,c\,\lambda+h_0\,\lambda ^2\in (-\infty,0)$ implies $\Re(\lambda)<0$.

By taking it account the sign of the real parts of these spatial eigenvalues when $\Re(\lambda)\gg 1$, one deduces that $L_{red}^{\varphi_0}(\lambda)$ is invertible for any $\lambda\neq0$ such that $\Re(\lambda)\geq0$ if and only if for any such $\lambda$,
\[
\Re(\gamma_{1}^{\varphi_0}(\lambda))>0>\Re(\gamma_{2}^{\varphi_0}(\lambda))\,.
\]
More directly this is equivalent to for any $\xi\in\RR$, if $i\xi$ is an eigenvalue of $M^{\varphi_0}(\lambda)$, then $\Re(\lambda)<0$ or $\lambda=0$.

In turn, the condition that $i\xi$ is an eigenvalue of $M^{\varphi_0}(\lambda)$ is equivalent to
\[
\xi^2\,h_0\,\left(g'+3\,h_0\,\varphi_0 \right)
+i\,\xi\,\hat{g}
+\,\frac{2\,C_f\,c}{h_0}\,\lambda+\,\lambda^2\,=\,0
\]
which is solved as
\[
\lambda\,=\,-\frac{C_f\,c}{h_0}\pm\sqrt{C_f^2\,c^2\,h_0^{-2}-i\,\xi\,\hat{g}-\xi^2\,h_0\,\left(g'+3\,h_0\,\varphi_0 \right)}\,.
\]
Note that one of the solutions is zero if and only if $\xi=0$ and in this case the other one is real negative. Setting $m(\xi):=C_f^2\,c^2\,h_0^{-2}-i\,\xi\,\hat{g}-\xi^2\,h_0\,\left(g'+3\,h_0\,\varphi_0 \right)$, one derives that for $\xi\neq0$, both solutions have negative real parts if and only if $\Re(\sqrt{m})< C_f\,c\,h_0^{-1}$, that is, if and only if
\begin{align*}
\sqrt{\frac12\,\left(|m|+\Re(m)\right)}\,&<C_f\,c\,h_0^{-1}\,,&
\textrm{i.e.}&&\sqrt{\Re(m)^2+\Im(m)^2}
\,&<2\,C_f^2\,c^2\,h_0^{-2}-\Re(m)\,,
\end{align*}
hence, if and only if
\begin{align*}
\Im(m)^2\,&<\,4\,C_f^2\,c^2\,h_0^{-2}\,\left(C_f^2\,c^2\,h_0^{-2}-\Re(m)\right)\,,&
\textrm{i.e.}&&
\hat{g}^2
&<\,4\,C_f^2\,c^2\,h_0^{-2}\,h_0\,\left(g'+3\,h_0\,\varphi_0\right)\,.
\end{align*}
Since $c^2=\hat{g}\,h_0/C_f$, the latter condition is indeed, as announced,
\be 
\label{hydro_RG}
\frac{c^2}{h_0\,\left(g'+3\,h_0\,\varphi_0\right)}\,<\,4.
\ee 

\subsection{Extended consistent splitting}

We now discuss consequences of the foregoing computations for the stability of smooth convective waves connecting $(h_0, c, 0,\varphi_-)$ at $-\infty$ to $(h_0, c, 0, \varphi_+)$ at $+\infty$. In unweighted topologies, computations are directly applicable. 

We would like to stress the consequence of using a weight that behaves as $e^{\theta_-\,x}$ near $-\infty$ and as $e^{\theta_+\,x}$ near $+\infty$. In the corresponding weighted topology, one needs that for any $\lambda\neq0$ such that $\Re(\lambda)\geq0$, $L_{red}^{\varphi_-}(\lambda)$ is invertible in spaces weighted with $e^{\theta_-\,x}$, and $L_{red}^{\varphi_+}(\lambda)$ is invertible in spaces weighted with $e^{\theta_+\,x}$. This is equivalent to for any such $\lambda$
\begin{align*}
\Re(\gamma_{1}^{\varphi_+}(\lambda))&>-\theta_+
>\Re(\gamma_{2}^{\varphi_+}(\lambda))\,,&
\Re(\gamma_{1}^{\varphi_-}(\lambda))&>-\theta_-
>\Re(\gamma_{2}^{\varphi_-}(\lambda))\,.
\end{align*}
Taking the limit $\lambda\to0$ shows that necessarily $\theta_+\leq 0$ and $\theta_-\leq 0$. Moreover choices
\[
\theta_\pm=
-\frac{\hat{g}}{2\,h_0\,\left(g'+3\,h_0\,\varphi_\pm \right)}
=\frac12f_1(\pm\infty)
\]
fit the requirement independently of any constraint on Froude numbers. This justifies the claims in Subsection~\ref{s:consubs}.

\bibliographystyle{alphaabbr}
\bibliography{Ref-roll}

\newcommand{\etalchar}[1]{$^{#1}$}
\begin{thebibliography}{MAB{\etalchar{+}}16}

\bibitem[AR22]{Audiard-Rodrigues}
C.~Audiard and L.~M. Rodrigues.
\newblock About plane periodic waves of the nonlinear {S}chr\"{o}dinger
  equations.
\newblock {\em Bull. Soc. Math. France}, 150(1):111--207, 2022.

\bibitem[BM04]{BM}
N.~J. Balmforth and S.~Mandre.
\newblock Dynamics of roll waves.
\newblock {\em J. Fluid Mech.}, 514:1--33, 2004.

\bibitem[BFZ15]{BFZ}
B.~Barker, H.~Freist\"{u}hler, and K.~Zumbrun.
\newblock Convex entropy, {H}opf bifurcation, and viscous and inviscid shock
  stability.
\newblock {\em Arch. Ration. Mech. Anal.}, 217(1):309--372, 2015.

\bibitem[BHLZ18]{BHLyZ}
B.~Barker, J.~Humpherys, G.~Lyng, and K.~Zumbrun.
\newblock Balanced flux formulations for multidimensional {E}vans-function
  computations for viscous shocks.
\newblock {\em Quart. Appl. Math.}, 76(3):531--545, 2018.

\bibitem[BJRZ11]{BJRZ}
B.~Barker, M.~A. Johnson, L.~M. Rodrigues, and K.~Zumbrun.
\newblock Metastability of solitary roll wave solutions of the {S}t. {V}enant
  equations with viscosity.
\newblock {\em Phys. D}, 240(16):1289--1310, 2011.

\bibitem[BM15]{Bedrossian-Masmoudi}
J.~Bedrossian and N.~Masmoudi.
\newblock Inviscid damping and the asymptotic stability of planar shear flows
  in the 2{D} {E}uler equations.
\newblock {\em Publ. Math. Inst. Hautes \'{E}tudes Sci.}, 122:195--300, 2015.

\bibitem[BMM16]{Bedrossian-Masmoudi-Mouhot}
J.~Bedrossian, N.~Masmoudi, and C.~Mouhot.
\newblock Landau damping: paraproducts and {G}evrey regularity.
\newblock {\em Ann. PDE}, 2(1):Art. 4, 71, 2016.

\bibitem[BGMR16]{BMR}
S.~Benzoni-Gavage, C.~Mietka, and L.~M. Rodrigues.
\newblock Co-periodic stability of periodic waves in some {H}amiltonian {PDE}s.
\newblock {\em Nonlinearity}, 29(11):3241--3308, 2016.

\bibitem[Bloss]{Blochas}
P.~Blochas.
\newblock Uniform convective stability for advection-reaction-diffusion
  equations in the inviscid limit.
\newblock Work in progress.

\bibitem[BR22]{Blochas-Rodrigues}
P.~Blochas and L.~M. Rodrigues.
\newblock Uniform asymptotic stability for convection-reaction-diffusion
  equations in the inviscid limit towards {R}iemann shocks.
\newblock {\em arXiv preprint arXiv:2201.13436}, 2022.

\bibitem[BWss]{Blochas-Wheeler}
P.~Blochas and A.~Wheeler.
\newblock Majda and {ZND} models for detonation: nonlinear stability vs.
  formation of singularities.
\newblock Work in progress.

\bibitem[Bre00]{Bre}
A.~Bressan.
\newblock {\em Hyperbolic systems of conservation laws}, volume~20 of {\em
  Oxford Lecture Series in Mathematics and its Applications}.
\newblock Oxford University Press, Oxford, 2000.
\newblock The one-dimensional Cauchy problem.

\bibitem[Bro69]{Br1}
R.~R. Brock.
\newblock Development of roll-wave trains in open channels.
\newblock {\em J. Hydraul. Div.}, 95(4):1401--1428, 1969.

\bibitem[Bro70]{Br2}
R.~R. Brock.
\newblock Periodic permanent roll waves.
\newblock {\em J. Hydraul. Div.}, 96(12):2565--2580, 1970.

\bibitem[{Cla}17]{C2}
{Clawpack Development Team}.
\newblock Clawpack version 5.4.0, 2017.

\bibitem[Cod61]{C}
E.~A. Coddington.
\newblock {\em An introduction to ordinary differential equations}.
\newblock Prentice-Hall Mathematics Series. Prentice-Hall, Inc., Englewood
  Cliffs, N.J., 1961.

\bibitem[Daf12]{Da}
C.~M. Dafermos.
\newblock {$N$}-waves in hyperbolic balance laws.
\newblock {\em J. Hyperbolic Differ. Equ.}, 9(2):339--354, 2012.

\bibitem[Dre49]{Dr}
R.~F. Dressler.
\newblock Mathematical solution of the problem of roll-waves in inclined open
  channels.
\newblock {\em Comm. Pure Appl. Math.}, 2:149--194, 1949.

\bibitem[DR20]{DR1}
V.~Duch\^{e}ne and L.~M. Rodrigues.
\newblock Large-time asymptotic stability of {R}iemann shocks of scalar balance
  laws.
\newblock {\em SIAM J. Math. Anal.}, 52(1):792--820, 2020.

\bibitem[DRar]{DR2}
V.~Duch{\^ e}ne and L.~M. Rodrigues.
\newblock Stability and instability in scalar balance laws: fronts and periodic
  waves.
\newblock {\em Anal. PDE}, to appear.

\bibitem[FRYZss]{FRYZ}
G.~Faye, L.~M. Rodrigues, Z.~Yang, and K.~Zumbrun.
\newblock Existence and stability of nonmonotone hydraulic shocks for the
  {S}aint-{V}enant equations of inclined thin-film flow.
\newblock Work in progress.

\bibitem[Gar93]{G}
R.~A. Gardner.
\newblock On the structure of the spectra of periodic travelling waves.
\newblock {\em J. Math. Pures Appl. (9)}, 72(5):415--439, 1993.

\bibitem[GZ98]{GZ}
R.~A. Gardner and K.~Zumbrun.
\newblock The gap lemma and geometric criteria for instability of viscous shock
  profiles.
\newblock {\em Comm. Pure Appl. Math.}, 51(7):797--855, 1998.

\bibitem[GRss]{GR}
L.~Gar{\'e}naux and L.~M. Rodrigues.
\newblock Convective stability in scalar balance laws.
\newblock Work in progress.

\bibitem[Goo89]{Goodman_bis}
J.~Goodman.
\newblock Stability of viscous scalar shock fronts in several dimensions.
\newblock {\em Trans. Amer. Math. Soc.}, 311(2):683--695, 1989.

\bibitem[Hen81]{He}
D.~Henry.
\newblock {\em Geometric theory of semilinear parabolic equations}, volume 840
  of {\em Lecture Notes in Mathematics}.
\newblock Springer-Verlag, Berlin-New York, 1981.

\bibitem[HPW08]{HPW}
F.~Huang, R.~Pan, and Y.~Wang.
\newblock Stability of contact discontinuity for {J}in-{X}in relaxation system.
\newblock {\em J. Differential Equations}, 244(5):1114--1140, 2008.

\bibitem[HLZ17]{HLyZ2}
J.~Humpherys, G.~Lyng, and K.~Zumbrun.
\newblock Multidimensional stability of large-amplitude {N}avier-{S}tokes
  shocks.
\newblock {\em Arch. Ration. Mech. Anal.}, 226(3):923--973, 2017.

\bibitem[Jef25]{Je}
H.~Jeffreys.
\newblock The flow of water in an inclined channel of rectangular section.
\newblock {\em Phil. Mag.}, 49(293):793--807, 1925.

\bibitem[JNR{\etalchar{+}}19]{JNRYZ}
M.~A. Johnson, P.~Noble, L.~M. Rodrigues, Z.~Yang, and K.~Zumbrun.
\newblock Spectral stability of inviscid roll waves.
\newblock {\em Comm. Math. Phys.}, 367(1):265--316, 2019.

\bibitem[JNRZ14]{JNRZ13}
M.~A. Johnson, P.~Noble, L.~M. Rodrigues, and K.~Zumbrun.
\newblock Behavior of periodic solutions of viscous conservation laws under
  localized and nonlocalized perturbations.
\newblock {\em Invent. Math.}, 197(1):115--213, 2014.

\bibitem[KP13]{KapitulaPromislow-stability}
T.~Kapitula and K.~Promislow.
\newblock {\em Spectral and dynamical stability of nonlinear waves}, volume 185
  of {\em Applied Mathematical Sciences}.
\newblock Springer, New York, 2013.
\newblock With a foreword by Christopher K. R. T. Jones.

\bibitem[Liu87]{L}
T.-P. Liu.
\newblock Hyperbolic conservation laws with relaxation.
\newblock {\em Comm. Math. Phys.}, 108(1):153--175, 1987.

\bibitem[LZ99]{LZ}
T.-P. Liu and Y.~Zeng.
\newblock Compressible {N}avier-{S}tokes equations with zero heat conductivity.
\newblock {\em J. Differential Equations}, 153(2):225--291, 1999.

\bibitem[MAB{\etalchar{+}}16]{C1}
K.~Mandli, A.~Ahmadia, M.~Berger, D.~Calhoun, D.~George, Y.~Hadjimichael,
  D.~Ketcheson, G.~Lemoine, and R.~LeVeque.
\newblock Clawpack: building an open source ecosystem for solving hyperbolic
  pdes.
\newblock {\em PeerJ Computer Science}, 2:e68, 2016.

\bibitem[MN10]{Mascia-Natalini}
C.~Mascia and R.~Natalini.
\newblock On relaxation hyperbolic systems violating the {S}hizuta-{K}awashima
  condition.
\newblock {\em Arch. Ration. Mech. Anal.}, 195(3):729--762, 2010.

\bibitem[MZ02]{MZ}
C.~Mascia and K.~Zumbrun.
\newblock Pointwise {G}reen's function bounds and stability of relaxation
  shocks.
\newblock {\em Indiana Univ. Math. J.}, 51(4):773--904, 2002.

\bibitem[MN85]{Matsumura-Nishihara}
A.~Matsumura and K.~Nishihara.
\newblock On the stability of travelling wave solutions of a one-dimensional
  model system for compressible viscous gas.
\newblock {\em Japan J. Appl. Math.}, 2(1):17--25, 1985.

\bibitem[MV11]{Mouhot-Villani}
C.~Mouhot and C.~Villani.
\newblock On {L}andau damping.
\newblock {\em Acta Math.}, 207(1):29--201, 2011.

\bibitem[OZ03]{OZ1}
M.~Oh and K.~Zumbrun.
\newblock Stability of periodic solutions of conservation laws with viscosity:
  analysis of the {E}vans function.
\newblock {\em Arch. Ration. Mech. Anal.}, 166(2):99--166, 2003.

\bibitem[PW92]{Pego-Weinstein}
R.~L. Pego and M.~I. Weinstein.
\newblock Eigenvalues, and instabilities of solitary waves.
\newblock {\em Philos. Trans. Roy. Soc. London Ser. A}, 340(1656):47--94, 1992.

\bibitem[PZ04]{PZ}
R.~Plaza and K.~Zumbrun.
\newblock An {E}vans function approach to spectral stability of small-amplitude
  shock profiles.
\newblock {\em Discrete Contin. Dyn. Syst.}, 10(4):885--924, 2004.

\bibitem[Ric13]{R}
G.~L. Richard.
\newblock {\em {\'E}laboration d'un mod{\`e}le d'{\'e}coulements turbulents en
  faible profondeur{:} application au ressaut hydraulique et aux trains de
  rouleaux}.
\newblock PhD thesis, Aix-Marseille, 2013.

\bibitem[RG12]{RG1}
G.~L. Richard and S.~L. Gavrilyuk.
\newblock A new model of roll waves: comparison with brock’s experiments.
\newblock {\em J. Fluid Mech.}, 698:374--405, 2012.

\bibitem[RG13]{RG2}
G.~L. Richard and S.~L. Gavrilyuk.
\newblock The classical hydraulic jump in a model of shear shallow-water flows.
\newblock {\em J. Fluid Mech.}, 725:492--521, 2013.

\bibitem[Rod13]{R_HDR}
L.~M. Rodrigues.
\newblock {\em Asymptotic stability and modulation of periodic wavetrains.
  General theory \& applications to thin film flows}.
\newblock Habilitation {\`a} diriger des recherches, Universit\'e Lyon 1, 2013.

\bibitem[Rod15]{R_Roscoff}
L.~M. Rodrigues.
\newblock Space-modulated stability and averaged dynamics.
\newblock {\em Journ\'ees \'Equations aux d\'eriv\'ees partielles}, pages
  1--15, 6 2015.

\bibitem[Rod18]{R_linearKdV}
L.~M. Rodrigues.
\newblock Linear asymptotic stability and modulation behavior near periodic
  waves of the {K}orteweg--de {V}ries equation.
\newblock {\em J. Funct. Anal.}, 274(9):2553--2605, 2018.

\bibitem[RYZssa]{RYZ}
L.~M. Rodrigues, Z.~Yang, and K.~Zumbrun.
\newblock Existence and stability of hydraulic shock profiles for the
  {R}ichard-{G}avrilyuk model.
\newblock Work in progress.

\bibitem[RYZssb]{RYZ2}
L.~M. Rodrigues, Z.~Yang, and K.~Zumbrun.
\newblock Spectral stability of {R}ichard-{G}avrilyuk roll waves.
\newblock Work in progress.

\bibitem[San02]{Sandstede}
B.~Sandstede.
\newblock Stability of travelling waves.
\newblock In {\em Handbook of dynamical systems, {V}ol. 2}, pages 983--1055.
  North-Holland, Amsterdam, 2002.

\bibitem[Sat76]{Sat}
D.~H. Sattinger.
\newblock On the stability of waves of nonlinear parabolic systems.
\newblock {\em Advances in Math.}, 22(3):312--355, 1976.

\bibitem[SYZ20]{SYZ}
A.~Sukhtayev, Z.~Yang, and K.~Zumbrun.
\newblock Spectral stability of hydraulic shock profiles.
\newblock {\em Phys. D}, 405:132360, 9, 2020.

\bibitem[SZ20]{SZ}
A.~Sukhtayev and K.~Zumbrun.
\newblock A {S}turm-{L}iouville theorem for quadratic operator pencils.
\newblock {\em J. Differential Equations}, 268(7):3848--3879, 2020.

\bibitem[Whi74]{W}
G.~B. Whitham.
\newblock {\em Linear and nonlinear waves}.
\newblock Wiley-Interscience [John Wiley \& Sons], New York-London-Sydney,
  1974.
\newblock Pure and Applied Mathematics.

\bibitem[YZ20]{YZ}
Z.~Yang and K.~Zumbrun.
\newblock Stability of {H}ydraulic {S}hock {P}rofiles.
\newblock {\em Arch. Ration. Mech. Anal.}, 235(1):195--285, 2020.

\bibitem[Zhe15]{Zh}
H.~Zheng.
\newblock Stability of a superposition of shock waves with contact
  discontinuities for the {J}in-{X}in relaxation system.
\newblock {\em Kinet. Relat. Models}, 8(3):559--585, 2015.

\bibitem[Zum01]{Z3}
K.~Zumbrun.
\newblock Multidimensional stability of planar viscous shock waves.
\newblock In {\em Advances in the theory of shock waves}, volume~47 of {\em
  Progr. Nonlinear Differential Equations Appl.}, pages 307--516. Birkh\"auser
  Boston, Boston, MA, 2001.

\bibitem[ZH98]{Z-H}
K.~Zumbrun and P.~Howard.
\newblock Pointwise semigroup methods and stability of viscous shock waves.
\newblock {\em Indiana Univ. Math. J.}, 47(3):741--871, 1998.

\end{thebibliography}

\end{document}